\newtheorem{theorem}{Theorem}[section]
\newtheorem{lemma}[theorem]{Lemma}
\newtheorem{corollary}[theorem]{Corollary}
\newtheorem{proposition}[theorem]{Proposition}
\title{Products of several commutators in a Lie nilpotent associative algebra}
\author{Galina Deryabina}
\address{Department of Computational Mathematics and Mathematical Physics (FS-11), Bauman Moscow State Technical University, 2-nd Baumanskaya Street, 5, 105005 Moscow, Russia}
\email{galina\_deryabina@mail.ru}
\author{Alexei Krasilnikov}
\address{Departamento de Matem\'atica, Universidade de Bras\'\i lia, 70910-900 Bras\'\i lia, DF, Brasil}
\email{alexei@unb.br}
\date{}
\begin{document}

\maketitle

\begin{abstract}
Let $F$ be a field of characteristic $\ne 2,3$ and let $A$ be a unital associative $F$-algebra. Define a left-normed commutator $[a_1, a_2, \dots , a_n]$ $(a_i \in A)$ recursively by $[a_1, a_2] = a_1 a_2 - a_2 a_1$, $[a_1, \dots , a_{n-1}, a_n] = [[a_1, \dots , a_{n-1}], a_n]$ $(n \ge 3)$. For $n \ge 2$, let $T^{(n)} (A)$ be the two-sided ideal in $A$ generated by all commutators $[a_1, a_2, \dots , a_n]$ ($a_i \in A )$. Define $T^{(1)} (A) = A$.

Let $k, \ell$ be integers such that $k > 0$, $0 \le \ell \le k$. Let $m_1, \dots , m_k$ be positive integers such that $\ell$ of them are odd and $k - \ell $ of them are even. Let $N_{k, \ell} =  \sum_{i=1}^k m_i -2k + \ell + 2 $. The aim of the present note is to show that, for any positive integers  $m_1, \dots , m_k$, in general,   $ T^{(m_1)} (A) \dots T^{(m_k)} (A) \nsubseteq T^{(1 + N_{k, \ell} )} (A)$. It is known that if $\ell < k$ (that is, if at least one of $m_i$ is even) then $T^{(m_1)} (A) \dots T^{(m_k)} (A) \subseteq T^{(N_{k, \ell} )} (A)$ for each $A$ so our result cannot be improved if $\ell <k$.

Let $N_k =  \sum_{i=1}^k m_i  -k+1$. Recently Dangovski has proved that if $m_1, \dots , m_k$ are any positive integers  then, in general, $T^{(m_1)} (A) \dots T^{(m_k)} (A) \nsubseteq T^{(1 + N_k)} (A) $. Since $ N_{k, \ell} = N_k - (k - \ell -1)$, Dangovski's result is stronger than ours if $\ell = k$ and is weaker than ours if $\ell \le k-2$; if $\ell = k-1$ then $N_k = N_{k, k-1}$ so both results coincide. It is known that if $\ell = k$ (that is, if all $m_i$ are odd) then, for each $A$,  $T^{(m_1)} (A) \dots T^{(m_k)} (A) \subseteq T^{(N_k)} (A)$
so in this case Dangovski's result cannot be improved.
\end{abstract}

\noindent \textbf{2010 AMS MSC Classification:} 16R10, 16R40

\noindent \textbf{Keywords:} polynomial identity, product of ideals, commutators


\section{Introduction}

Let $R$ be an arbitrary unital associative and commutative ring and let $A$ be a unital associative algebra over $R$. Define a left-normed commutator $[a_1, a_2, \dots , a_n]$ ($a_i \in A$) recursively by $[a_1, a_2] = a_1 a_2 - a_2 a_1$, $[a_1, \dots , a_{n-1}, a_n] = [[a_1, \dots , a_{n-1}], a_n]$ $(n \ge 3)$. For $n \ge 2$, let $T^{(n)} (A)$ be the two-sided ideal in $A$ generated by all commutators $[a_1, a_2, \dots , a_n]$ ($a_i \in A )$. Define $T^{(1)} (A) = A$. Clearly, we have 
\[
A = T^{(1)} (A) \supseteq T^{(2)} (A) \supseteq T^{(3)} (A) \supseteq \dots \supseteq T^{(n)} (A) \supseteq \dots . 
\]

We are concerned with the following.

\medskip
\noindent
\textbf{Problem 1.}
\textit{
Let $k \ge 2$ and let $m_1, \dots ,m_k$ be positive integers. Find the maximal integer $N = N(R,m_1, \dots ,m_k)$ such that, for each $R$-algebra $A$,
}
\[
T^{(m_1)} (A) \dots T^{(m_k)} (A) \subseteq  T^{(N)} (A).
\]

Let $X = \{x_1, x_2, \dots \} $ be an infinite countable set and let $R \langle X \rangle$ be the free unital associative algebra over $R$ freely generated by $X$. Define $T^{(n)} = T^{(n)} ( R \langle X \rangle )$.  

\medskip
\noindent
\textbf{Problem 2.}
\textit{
Let $k \ge 2$ and let $m_1, \dots ,m_k$ be positive integers. Find the maximal integer $N = N(R,m_1, \dots ,m_k)$ such that 
}
\[
T^{(m_1)}  \dots T^{(m_k)}  \subseteq  T^{(N)} .
\]

It is easy to check that Problem 1 is equivalent to Problem 2, and the integer $N$ in both problems is the same.

Problem 2 and some other similar questions have been recently studied by Dangovski \cite{Dangovski15} (using different terminology). The work of Dangovski  was motivated by the results of Etingof, Kim and Ma \cite{EKM09} and Bapat and Jordan \cite{BJ10}, which in turn were motivated by the pioneering article by Feigin and Shoikhet \cite{FS07}.

The following assertion was proved by Latyshev \cite[Lemma 1]{Latyshev65} in 1965 (Latyshev's paper was published in Russian) and independently rediscovered by Gupta and Levin \cite[Theorem 3.2]{GL83} in 1983.
\begin{theorem}[see \cite{GL83,Latyshev65}]
\label{LGL}
Let $R$ be an arbitrary unital associative and commutative ring and let $A$ be an associative $R$-algebra. Let $m, n \in \mathbb Z,$ $m, n \ge 1$. Then 
\[
T^{(m)} (A) \  T^{(n )} (A) \subseteq T^{(m+n -2)} (A).
\]
\end{theorem}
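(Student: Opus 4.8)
The plan is to argue in the free algebra $R\langle X\rangle$ (which by the stated equivalence of Problems 1 and 2 suffices for all $A$) and to exploit that $T^{(m+n-2)}(A)$ is a two-sided ideal. Since $T^{(m)}(A)$ is spanned by elements $u\,[a_1,\dots,a_m]\,u'$ and $T^{(n)}(A)$ by elements $w\,[b_1,\dots,b_n]\,w'$, a typical element of the product is $u\,[a_1,\dots,a_m]\,v\,[b_1,\dots,b_n]\,w'$ with $v=u'w$; because the target is an ideal it suffices to prove
\[
g\,v\,h\in T^{(m+n-2)}(A),\qquad g=[a_1,\dots,a_m],\ h=[b_1,\dots,b_n],\ v\in A .
\]
First I would dispose of the cases $m\le 2$ or $n\le 2$: there $m+n-2\le\max(m,n)$, so the containment is immediate from $h\in T^{(n)}(A)$ (resp. $g\in T^{(m)}(A)$) being multiplied by algebra elements, together with the descending chain $T^{(k)}(A)\supseteq T^{(k+1)}(A)$. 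The real content lies in $m,n\ge 3$.

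The engine I would use is the peeling identity obtained by writing $h=[h',b_n]=h'b_n-b_nh'$ with $h'=[b_1,\dots,b_{n-1}]$ and collecting terms: for $v=1$ a short computation gives
\[
g\,h=[\,g h',\,b_n\,]-[g,b_n]\,h',
\]
where $[g,b_n]=[a_1,\dots,a_m,b_n]$ has length $m+1$ and $h'$ has length $n-1$. I would run induction on the length $n$ of the second commutator. The \emph{rebalanced} term $[g,b_n]\,h'$ is a product of commutators of lengths $m+1$ and $n-1$, hence by the inductive hypothesis it lies in $T^{((m+1)+(n-1)-2)}(A)=T^{(m+n-2)}(A)$. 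For the \emph{collected} term $[gh',b_n]$ I would invoke the auxiliary statement that a product of two commutators of total length $d$ is central modulo $T^{(d-1)}(A)$, i.e. $[c_1c_2,a]\in T^{(d-1)}(A)$; applied to $gh'$ (total length $m+n-1$) this yields $[gh',b_n]\in T^{(m+n-2)}(A)$, closing the inductive step. The middle factor $v$ rides along through the same manipulations via the Leibniz rule for $[-,b_n]$ and the Jacobi identity $[[X,Y],Z]=[[X,Z],Y]+[X,[Y,Z]]$, so the general case reduces to these same two ingredients.

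The hard part is exactly the auxiliary centrality statement, and here I expect the main obstacle. It is tempting to prove it by Leibniz, $[c_1c_2,a]=[c_1,a]c_2+c_1[c_2,a]$, but each summand is again a product of two commutators of total length $d+1$, so this merely reduces centrality at level $d$ to the containment we are after at level $d+1$, which in turn reduces back to centrality at level $d$: the formal commutator calculus closes into a tautology and never manufactures the extra unit in the exponent. Breaking this circularity is the crux and cannot be achieved by identities alone. The step I expect to be genuinely difficult is a structural analysis of the commutator filtration of $R\langle X\rangle=U(L)$, $L$ the free Lie algebra: using the Poincar\'e--Birkhoff--Witt basis of ordered products of left-normed basic commutators, one must identify precisely which basis monomials lie in $T^{(k)}$ and run a characteristic-free collection (leading-monomial) argument. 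This is where the specific exponent $m+n-2$, and the independence of the characteristic of $R$, ultimately come from, and it is the combinatorial heart of the theorem.
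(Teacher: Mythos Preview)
The paper does not give its own proof of this theorem: Theorem~\ref{LGL} is stated with attribution to Latyshev and to Gupta--Levin and is used as a black box (only Proposition~\ref{lemmalowerbound} and Theorem~\ref{maintheorem2} are proved in Section~2). So there is no ``paper's own proof'' to compare against; I can only comment on your attempt in its own right.

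Your diagnosis of the difficulty is accurate. The peeling identity
\[
gh=[gh',b_n]-[g,b_n]\,h'
\]
is correct, and the second term does fall to induction on $n$ (lengths $(m{+}1,n{-}1)$). For the first term you need the auxiliary statement that a product of two commutators of total length $d$ is central modulo $T^{(d-1)}$; expanding $[gh',b_n]=[g,b_n]h'+g[h',b_n]$ shows this reduces back to an instance of the very claim you are proving for the pair $(m,n)$, so the loop you describe is genuine. Your write-up is honest about this.

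Where I would push back is on the proposed cure. Invoking PBW and a leading-monomial analysis of the commutator filtration is much heavier machinery than the original sources use; both Latyshev's and Gupta--Levin's arguments are short, elementary commutator calculus. The extra ingredient that breaks the circularity is not structural depth but a finiteness device. For instance, working multihomogeneously in $R\langle X\rangle$ (which you already reduced to), in a fixed total degree $D$ every left-normed commutator of length $>D$ vanishes; hence the ``wrong-direction'' reduction to lengths $(m{+}1,n)$ that your Leibniz step produces terminates after finitely many iterations, and one can run a legitimate (downward) induction on $m$ inside that degree. Alternatively, using the companion identity
\[
[a_1,\dots,a_m]\,d=[a_1,\dots,a_{m-1},a_m d]-a_m\,[a_1,\dots,a_{m-1},d],
\]
the second term is $a_m$ times a Lie element of weight $m{+}n{-}1$ (hence already in $T^{(m+n-1)}$), and one keeps rewriting the first term; again the grading guarantees termination. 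So your instinct that ``identities alone'' do not close the loop is correct in a narrow sense, but the missing input is just the grading of the free algebra, not a PBW-type structure theorem.
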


Latyshev \cite{Latyshev65} has actually proved that $T^{(m)}  \  T^{(n )}  \subseteq T^{(m+n -2)} $ in $R \langle X \rangle$; this assertion is equivalent to Theorem \ref{LGL}.

Note that, for a unital associative ring $R$, we have $\frac{1}{6} \in R$ if and only if $2 (=1+1)$ and $3$ are invertible in $R$. The theorem below was proved by Sharma and Srivastava \cite[Theorem 2.8]{SS90} in 1990 and independently rediscovered (with different proofs) by Bapat and Jordan \cite[Corollary 1.4]{BJ10} in 2013 and by Grishin and Pchelintsev \cite[Theorem 1]{GrishinPchel15} in 2015. 

\begin{theorem}[see \cite{BJ10,GrishinPchel15,SS90}]
\label{BJ}
Let $R$ be an arbitrary unital associative and commutative ring such that $\frac{1}{6} \in R$ and let $A$ be an associative $R$-algebra. Let $m, n \in \mathbb Z,$ $m, n >1$ and at least one of the numbers $m$, $n$ is odd. Then
\begin{equation*}
\label{tmtn}
T^{(m)} (A) \  T^{(n)} (A) \subseteq T^{(m + n -1)} (A).
\end{equation*}
\end{theorem}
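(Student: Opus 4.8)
The plan is to prove Theorem \ref{BJ} (the statement $T^{(m)}(A)\, T^{(n)}(A) \subseteq T^{(m+n-1)}(A)$ when $\frac16 \in R$ and at least one of $m,n$ is odd) by reducing everything to a computation in the free algebra $R\langle X\rangle$ and then pushing a single commutator into a longer one. Since the two-sided ideals $T^{(m)}$ and $T^{(n)}$ are generated by left-normed commutators of lengths $m$ and $n$ respectively, and since products of such ideals are spanned by elements of the form $u \cdot c_m \cdot v \cdot c_n \cdot w$ with $c_m \in T^{(m)}$, $c_n \in T^{(n)}$ generating commutators and $u,v,w$ monomials, the first reduction I would carry out is to show it suffices to prove that a product $[a_1,\dots,a_m]\,[b_1,\dots,b_n]$ of two generating commutators lies in $T^{(m+n-1)}$. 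The passage from this special case to the full ideal product is routine: one absorbs the outer factors $u,w$ since $T^{(m+n-1)}$ is an ideal, and one handles the interior factor $v$ by using the Leibniz-type identity $c_m\, v\, c_n = c_m c_n v + c_m [v, c_n]$ together with Theorem \ref{LGL} to control the weight bookkeeping.

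The heart of the matter is therefore the identity-level claim that, modulo $T^{(m+n-1)}$, a product of two commutators of lengths $m$ and $n$ (with at least one length odd) can be absorbed into $T^{(m+n-1)}$. First I would record the elementary commutator identities available in any associative algebra: the Jacobi/Leibniz rule $[x,yz]=[x,y]z+y[x,z]$ and its left-hand companion, and the standard rewriting that lets one move a product inside a bracket at the cost of lower-weight correction terms. The key structural fact to establish is that the product $[a_1,\dots,a_m][b_1,\dots,b_n]$ can be rewritten, modulo the ideal generated by longer commutators, as a combination of single left-normed commutators of length $m+n-1$. Here the parity hypothesis enters decisively: when one symmetrizes the product of two brackets, the terms of weight $m+n-2$ that Theorem \ref{LGL} alone would leave behind cancel precisely when one of the lengths is odd, whereas they survive (and genuinely cannot be removed) when both lengths are even.

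The technical engine I expect to use is the Dynkin--Specht--Wever / bracket-symmetrization machinery together with a careful analysis in the relatively free algebra $R\langle X\rangle / T^{(m+n-1)}$, which is a Lie nilpotent algebra of class $m+n-2$. In such an algebra the associated graded Lie structure is well understood, and one can compute the product of two commutators modulo $T^{(m+n-1)}$ by expanding in a Poincar\'e--Birkhoff--Witt-type basis; the invertibility of $2$ and $3$ is exactly what is needed to divide by the combinatorial coefficients that arise when collecting like terms (this is where $\frac16\in R$ is used, and it is why the analogous statement can fail over $\mathbb Z$). Concretely, I would show that
\[
[a_1,\dots,a_m]\,[b_1,\dots,b_n] \equiv \tfrac12\bigl[[a_1,\dots,a_m],b_1,\dots,b_n\bigr] \pmod{T^{(m+n-1)}}
\]
up to a correction that vanishes under the parity assumption, and then observe that the right-hand bracket is a left-normed commutator of length $m+n-1$, hence lies in $T^{(m+n-1)}$.

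The main obstacle, and the step that requires the most care, is controlling the weight-$(m+n-2)$ correction terms: one must verify that the symmetric part of the product of the two commutators — which is what Theorem \ref{LGL} cannot push past level $m+n-2$ — is forced to cancel modulo $T^{(m+n-1)}$ precisely because of the oddness of one length. I expect this to come down to a parity argument on the number of transpositions needed to symmetrize a Lie monomial of odd length, combined with the antisymmetry $[x,y]=-[y,x]$, so that an odd-length bracket is annihilated by the relevant symmetrizer. Getting the bookkeeping of the correction coefficients exactly right, and confirming that every leftover term either vanishes by parity or is itself a commutator of length at least $m+n-1$, is the crux; once that is in place, the reduction to the generating case and the ideal absorption finish the proof.
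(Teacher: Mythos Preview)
The paper does not give its own proof of Theorem~\ref{BJ}; the result is quoted from \cite{BJ10,GrishinPchel15,SS90} and used as a black box, so there is nothing in the paper to compare your argument against.

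On your outline itself: the reductions (pass to the free algebra, reduce to a product $c_m c_n$ of two generating left-normed commutators, absorb the outer monomials $u,w$ by the ideal property, and handle the interior $v$ via $c_m v c_n = c_m c_n v + c_m[v,c_n]$ together with Theorem~\ref{LGL}) are standard and correct. The gap is at the core step. Your displayed congruence
\[
[a_1,\dots,a_m]\,[b_1,\dots,b_n] \equiv \tfrac12\bigl[[a_1,\dots,a_m],b_1,\dots,b_n\bigr] \pmod{T^{(m+n-1)}}
\]
is vacuous as written: the right-hand bracket involves $m+n$ entries, so it already lies in $T^{(m+n)}\subseteq T^{(m+n-1)}$ (not in $T^{(m+n-1)}\setminus T^{(m+n)}$ as your length count suggests), and the congruence therefore just restates the conclusion. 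What is genuinely at stake is the anticommutator $c_m c_n + c_n c_m$, and you do not supply an argument for why it lies in $T^{(m+n-1)}$; you only say you ``expect'' a parity/symmetrization count to force the weight-$(m+n-2)$ terms to cancel.

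That expectation is not borne out by a one-line antisymmetry observation. The published proofs all hinge on a specific, nontrivial polynomial identity that must be verified by hand and that uses invertibility of $2$ and $3$ in an essential way (for instance, Bapat--Jordan prove an explicit identity placing $[a,b,c]\cdot d + d\cdot[a,b,c]$ type expressions into $T^{(4)}$ using $\tfrac13$, and then induct on the odd length; Sharma--Srivastava and Grishin--Pchelintsev proceed through different but equally concrete identities). Your sketch contains no such identity, and ``the number of transpositions needed to symmetrize a Lie monomial of odd length'' does not by itself produce one. Until you write down and verify an explicit relation of this kind (even just the base case $T^{(3)}T^{(2)}\subseteq T^{(4)}$, which already requires $\tfrac13\in R$), the argument is incomplete.
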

Note that  Grishin and Pchelintsev \cite{GrishinPchel15} have actually proved that $T^{(m)}  \  T^{(n)}  \subseteq T^{(m + n -1)} $; this result is equivalent to Theorem \ref{BJ}.

Let $N_k = \sum_{i=1}^k m_i - k + 1$. The proposition below follows immediately from Theorem \ref{BJ}.
\begin{proposition}
\label{odd}
Let $R$ be an arbitrary unital associative and commutative ring such that $\frac{1}{6} \in R$ and let $A$ be an associative $R$-algebra. Let $k>0$ be an integer and let $m_i > 0$ $(i=1, \dots , k)$ be \textbf{odd} integers. Then 
\[
T^{(m_1)} (A) \dots T^{(m_k)} (A) \subseteq  T^{(N_k )} (A).
\]
\end{proposition}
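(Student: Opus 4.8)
The plan is to prove the inclusion by induction on $k$, merging factors two at a time via Theorem \ref{BJ}. Since that theorem requires both superscripts to exceed $1$, I would first dispose of any factors with $m_i = 1$. If $m_i = 1$ then $T^{(1)}(A) = A$; because a product of two-sided ideals is again a two-sided ideal $J$ with $A J = J = J A$ (as $A$ is unital), such a factor can be absorbed into a neighbour without changing the product. Deleting a factor $m_i = 1$ lowers $\sum_j m_j$ by $1$ and $k$ by $1$, hence leaves $N_k$ unchanged. So I may assume every $m_i \ge 3$ from the outset; and if all $m_i$ equal $1$, the product is simply $A = T^{(1)}(A)$ with $N_k = 1$, so there is nothing to prove.

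With all $m_i \ge 3$ odd, I would induct on $k$. The base case $k = 1$ is immediate since $N_1 = m_1$. For the inductive step I would apply the hypothesis to the first $k-1$ factors, giving $T^{(m_1)}(A)\dots T^{(m_{k-1})}(A) \subseteq T^{(N_{k-1})}(A)$, and then combine this partial product with the last factor $T^{(m_k)}(A)$ using Theorem \ref{BJ}.

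The point that makes the induction run is a parity observation. Since each $m_i$ is odd, $N_{k-1} = \sum_{i=1}^{k-1} m_i - (k-1) + 1 \equiv 1 \pmod 2$, so the intermediate index $N_{k-1}$ is again odd, and $N_{k-1} \ge 2(k-1)+1 > 1$. Thus Theorem \ref{BJ} applies to the pair $(N_{k-1}, m_k)$ (both $>1$, with $N_{k-1}$ odd) and yields $T^{(N_{k-1})}(A)\, T^{(m_k)}(A) \subseteq T^{(N_{k-1}+m_k-1)}(A)$. A direct computation gives $N_{k-1} + m_k - 1 = \sum_{i=1}^{k} m_i - k + 1 = N_k$, which closes the induction.

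This argument has no genuine obstacle: it is a routine induction, and the only matters requiring a little care are the boundary case $m_i = 1$, where Theorem \ref{BJ} does not literally apply but the inclusion is immediate from $T^{(1)}(A) = A$, and the verification that the running index $N_{k-1}$ stays odd so that Theorem \ref{BJ} remains applicable at every step.
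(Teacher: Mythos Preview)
Your argument is correct and is exactly the intended elaboration: the paper merely says the proposition ``follows immediately from Theorem~\ref{BJ}'', and repeated application of that theorem (after disposing of factors with $m_i=1$) is precisely what is meant. One small simplification: your parity check on $N_{k-1}$ is not needed, since Theorem~\ref{BJ} only requires \emph{one} of the two indices to be odd, and $m_k$ already is.
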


Let $N_{k, \ell} = \sum_{i=1}^k m_i -2k + \ell + 2 = N_k - (k-\ell -1)$. One can deduce from Theorems \ref{LGL} and \ref{BJ} the following proposition (see Dangovski \cite[Section 6]{Dangovski15}). 

\begin{proposition}[see \cite{Dangovski15}]
\label{lemmalowerbound}
Let $R$ be an arbitrary unital associative and commutative ring such that $\frac{1}{6} \in R$ and let $A$ be an associative $R$-algebra. Let $k, \ell$ be integers such that $0 \le \ell <k$. Let $m_i \ge 2$ $(i=1, \dots , k)$ be integers such that $\ell$ of them are odd and $(k - \ell )> 0$ of them are even. Then 
\begin{equation}
\label{lowerbound}
T^{(m_1)} (A) \dots T^{(m_k)} (A) \subseteq T^{(N_{k, \ell} )} (A) .
\end{equation}
\end{proposition}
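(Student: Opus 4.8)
The plan is to prove the containment by induction on $\ell$, the number of odd exponents among $m_1, \dots , m_k$, using Theorem \ref{LGL} to pay the cost $2$ when merging two even factors and Theorem \ref{BJ} to pay only $1$ whenever an odd factor is involved. The mechanism is that products of two-sided ideals are monotone and associative: if $T^{(m_i)} T^{(m_{i+1})} \subseteq T^{(m')}$, then replacing this adjacent pair gives $T^{(m_1)} \cdots T^{(m_k)} \subseteq T^{(m_1)} \cdots T^{(m')} \cdots T^{(m_k)}$, a product of $k-1$ ideals. Writing $S = \sum_{i=1}^k m_i$, the target index $N_{k, \ell} = S - 2k + \ell + 2$ corresponds to carrying out the $k-1$ merges at a total cost of only $2(k-1) - \ell$, i.e. to saving one unit on exactly $\ell$ of the merges.

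For the base case $\ell = 0$ all $m_i$ are even, and I would simply iterate Theorem \ref{LGL} across the $k-1$ adjacent merges, each costing $2$, to obtain $T^{(m_1)} \cdots T^{(m_k)} \subseteq T^{(S - 2(k-1))} = T^{(N_{k,0})}$. Every intermediate exponent remains $\ge 2$, since each even factor is $\ge 2$ and $\mathrm{even} + \mathrm{even} - 2 \ge 2$, so the hypotheses of Theorem \ref{LGL} hold throughout.

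For the inductive step $1 \le \ell < k$, both parities occur among the $m_i$, so somewhere in the product an odd exponent is adjacent to an even one. I would merge that pair by Theorem \ref{BJ}, which applies because $\tfrac16 \in R$ and both exponents exceed $1$; this costs only $1$ and produces the even exponent $m' = \mathrm{odd} + \mathrm{even} - 1 \ge 4$. The result is a product of $k-1$ ideals, all of whose exponents are $\ge 2$, with exactly $\ell - 1$ of them odd and with total $S - 1$. Applying the induction hypothesis places this product in $T^{(N_{k-1, \ell - 1})}$, and a direct substitution gives $N_{k-1, \ell - 1} = (S-1) - 2(k-1) + (\ell - 1) + 2 = S - 2k + \ell + 2 = N_{k, \ell}$, closing the induction.

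The step I expect to be the crux is the choice of which pair to merge: one must combine the odd factor with an even neighbour rather than with another odd factor, so that the number of odd exponents decreases by exactly one and the hypothesis $\ell - 1 < k - 1$ (together with the survival of at least one even factor) is preserved. Granting that, the remaining points — that an odd–even adjacency always exists when both parities are present, that every exponent stays $\ge 2$ so that Theorems \ref{LGL} and \ref{BJ} always apply, and that the index bookkeeping $N_{k-1,\ell-1} = N_{k,\ell}$ works out — are all routine.
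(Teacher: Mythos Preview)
Your argument is correct. The only point worth a second glance is the adjacency claim in the inductive step, and it is fine: since $1\le \ell < k$, the parity sequence of $(m_1,\dots,m_k)$ is nonconstant, so some consecutive pair has opposite parities; Theorem~\ref{BJ} then applies because each $m_i\ge 2$ forces the odd member to be $\ge 3$, and the merged exponent $m'=\mathrm{odd}+\mathrm{even}-1$ is even and $\ge 4$, so the induction hypothesis (now with parameters $k-1$ and $\ell-1$, and still $\ell-1<k-1$) is available.

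The paper proves the same proposition by induction on $k$ rather than on $\ell$, peeling off the rightmost factor $T^{(m_k)}(A)$ and splitting into three cases according to whether $m_k$ is odd, or $m_k$ is even with another even $m_i$ present, or $m_k$ is the only even exponent. Your organization is a genuine variant: by inducting on $\ell$ you isolate a clean base case ($\ell=0$, handled entirely by Theorem~\ref{LGL}) and a single inductive step that uses Theorem~\ref{BJ} exactly once, so no case analysis is needed. The paper's version, on the other hand, makes the role of the last factor explicit and does not rely on locating an odd--even adjacency inside the string. Both routes use the same two ingredients and the same arithmetic identity $N_{k-1,\ell-1}=N_{k,\ell}$ (respectively its analogues), so neither is deeper than the other; yours is slightly more economical in bookkeeping.
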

We prove Proposition \ref{lemmalowerbound} in Section 2 in order to have the paper more self-contained.

Recently Dangovski \cite[Proposition 2.2]{Dangovski15} has proved a result that can be reformulated as follows.
\begin{theorem}[see \cite{Dangovski15}]
\label{D}
Let $F$ be a field and let $k$ be a positive integer. Let $m_1, \dots , m_k$ be positive integers and let $N_k$ be as above. Then there exists an associative $F$-algebra $A$ such that
\begin{equation}
\label{notin-odd}
T^{(m_1)} (A) \dots T^{(m_k)} (A) \nsubseteq T^{(1 + N_k )} (A) .
\end{equation}
\end{theorem}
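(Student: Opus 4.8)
The plan is to reduce the statement to a single nonvanishing assertion in a free algebra and then to prove that assertion by mapping to an explicit witness. Since Problem 1 and Problem 2 have the same answer, it suffices to take $A = R\langle X\rangle$ and to show that the natural product
\[
w = [x_1, \dots , x_{m_1}]\,[x_{m_1+1}, \dots , x_{m_1+m_2}] \cdots [x_{m_1+\dots+m_{k-1}+1}, \dots , x_{m_1+\dots+m_k}],
\]
built from one multilinear commutator of length $m_i$ out of each factor in pairwise distinct variables, does not lie in $T^{(1+N_k)}$. To prove $w \notin T^{(1+N_k)}$ I would produce an associative $F$-algebra $B$ that is Lie nilpotent of class at most $N_k$, i.e. $T^{(1+N_k)}(B)=0$, together with a substitution of the $x_i$ by elements of $B$ under which the value of $w$ is nonzero; the homomorphism $R\langle X\rangle \to B$ then maps $T^{(1+N_k)}$ to $0$ but $w$ to something nonzero. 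A mere degree count does not suffice here: $w$ is homogeneous of degree $\sum_i m_i = N_k + k - 1$, which for $k \ge 2$ exceeds $N_k$, so $w$ lives in a range where $T^{(1+N_k)}$ is already nonzero. (For $k=1$ the degree argument does work: $[x_1,\dots,x_{m_1}]$ is a nonzero homogeneous element of degree $m_1 = N_1$, while $T^{(N_1+1)} = T^{(m_1+1)}$ is concentrated in degrees $\ge m_1+1$, so the base case is immediate.)

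The structural fact that guides the construction is that in any $B$ with $T^{(1+N_k)}(B)=0$ every commutator of length $N_k$ is central, since extending it gives a length-$(1+N_k)$ commutator lying in $T^{(1+N_k)}(B)=0$. Thus the problem becomes: realize the $k$ commutators $c_i$ (of lengths $m_i \le N_k$) as nonzero, essentially central elements of such a $B$ whose product $c_1\cdots c_k$ is nonzero. The prototype is the Grassmann algebra $E$, which already settles the case $m_1=\dots=m_k=2$ when $\operatorname{char}F \ne 2$: there each $c_i = [v_{2i-1},v_{2i}]$ is central and nonzero, their product is the nonzero top monomial, and $[E,E]\subseteq Z(E)$ forces $T^{(3)}(E)=0\subseteq T^{(1+N_k)}(E)$, giving even more than required. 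The quantity $N_k = \sum_i m_i - (k-1)$ is exactly the left-normed commutator depth a $k$-fold product of commutators can reach, each of the $k-1$ junctions between consecutive factors contributing a product rather than a bracket and so lowering the attainable depth by one; this is the constructive mirror of the merging bounds behind Theorem \ref{LGL} and Theorem \ref{BJ}.

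The crux, and the step I expect to be the main obstacle, is building a characteristic-free analogue of the Grassmann witness for arbitrary lengths, since Theorem \ref{D} is claimed over every field and the Grassmann and cancellation-based arguments degenerate in characteristics $2$ and $3$. I would attempt an explicit monomial (or graded ``color-Grassmann'') algebra inside the strictly upper triangular matrices, using that a chain bracket $[E_{a_0a_1},E_{a_1a_2},\dots,E_{a_{m-1}a_m}] = E_{a_0a_m}$ realizes a length-$m$ commutator as a single matrix unit in a completely characteristic-free way. Two opposing dangers define the difficulty and must be reconciled. First, $B$ cannot be all of $N_r$: there $T^{(n)}(N_r)=N_r^{\,n}$, so the telescoping product, living on the $(\sum_i m_i)$-th superdiagonal, would fall into $T^{(1+N_k)}$; one must delete enough intervals that every length-$(1+N_k)$ commutator dies while the product's support survives. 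Second, the deletions must not collapse the product into a commutator: if the reversed and scrambled products of the $c_i$ were forced to vanish, then, for instance, $c_1c_2$ would coincide with $[c_1,c_2]$ and hence lie in $T^{(1+N_k)}$, destroying the example. Resolving this tension means keeping the $c_i$ behaving like independent central elements with nonzero products (as in $E$) while forcing the Lie nilpotency class to be at most $N_k$, and the delicate part is verifying — by a weight/multigrading bookkeeping that pins down a unique surviving top monomial — that no left-normed bracket of length $1+N_k$ survives for any bracketing and in any characteristic.

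As a cross-check on the depth count I would also note that when all $m_i$ are odd one can apply Theorem \ref{BJ} repeatedly to merge adjacent factors, rewriting $w$ modulo $T^{(N_k+1)}$ as a combination of elements of the form (degree one)$\,\cdot\,$(length-$N_k$ commutator); this reduces $k$ but still leaves a base nonvanishing in $T^{(N_k)}/T^{(N_k+1)}$ to be certified, confirming that the explicit witness of the previous paragraph, and the verification that its class does not exceed $N_k$, is the genuine content of the proof and the sharpness counterpart of Proposition \ref{odd}.
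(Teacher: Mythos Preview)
The paper does not contain its own proof of Theorem~\ref{D}: the result is stated with attribution to Dangovski and cited to \cite[Proposition~2.2]{Dangovski15}. The only argument the paper supplies around this statement is the two-line reduction in Corollary~\ref{D-corollary}, which takes Theorem~\ref{D} as a black box. So there is no proof in the paper against which to compare your proposal.

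That said, your proposal is not a proof either, but a plan with an explicitly acknowledged gap. The reduction to exhibiting a witness algebra $B$ with $T^{(1+N_k)}(B)=0$ in which the product $w$ of the $k$ multilinear commutators takes a nonzero value is correct and standard, and your treatment of the base case $k=1$ and of the Grassmann example for $m_1=\cdots=m_k=2$ in characteristic $\ne 2$ is fine. But the heart of the matter---constructing such a $B$ for arbitrary $m_i$ over an arbitrary field---is left as an intention. You write that you ``would attempt'' a deleted-interval subalgebra of strictly upper triangular matrices, you correctly isolate the two opposing constraints (kill every length-$(1+N_k)$ left-normed commutator while keeping the product $c_1\cdots c_k$ nonzero and not itself a commutator), and you describe the bookkeeping you expect to need; but you neither specify the deletions nor carry out the verification, and you say so yourself. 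Since that construction and its verification are the entire content of the theorem, the proposal has a genuine gap at exactly the point that matters.

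For orientation, the construction the paper \emph{does} carry out (for Theorem~\ref{maintheorem2}) uses $A=E\otimes E_r$ in characteristic $\ne 2$ and a group-algebra analogue $F\mathcal{G}/I \otimes F\mathcal{G}_r/I_r$ in characteristic $2$. That construction is calibrated to $N_{k,\ell}$, and in the all-odd case $\ell=k$ it yields only the weaker bound $N_{k,k}=N_k+1$, as the paper itself remarks. So even the machinery developed in this paper does not recover Theorem~\ref{D}; to complete your argument you would need either to consult Dangovski's original construction or to actually build and verify the matrix-unit witness you sketch.
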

One can deduce from Theorem \ref{D} the following.
\begin{corollary}
\label{D-corollary}
Let $R$ be an arbitrary unital associative and commutative ring and let $k, m_1, \dots , m_k, N_k$ be as in Theorem \ref{D}. Then there exists an associative $R$-algebra $A$ such that (\ref{notin-odd}) holds.
\end{corollary}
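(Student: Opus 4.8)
The plan is to reduce the statement for an arbitrary coefficient ring $R$ to the field case already established in Theorem \ref{D}, by pushing ideals forward along a quotient homomorphism of coefficient rings. We may assume $R \ne \{0\}$, so that $R$ has a maximal ideal $\mathfrak{m}$; then $F := R/\mathfrak{m}$ is a field. Applying the projection $R \to F$ to the coefficients of polynomials while fixing each variable $x_i$ induces a \emph{surjective} ring homomorphism $\pi \colon R\langle X \rangle \to F\langle X \rangle$. Since $\pi$ is surjective and sends each left-normed commutator $[a_1, \dots , a_n]$ to $[\pi(a_1), \dots , \pi(a_n)]$, it carries the two-sided ideal $T^{(n)}(R\langle X \rangle)$ onto $T^{(n)}(F\langle X \rangle)$ for every $n$, and consequently carries the product $T^{(m_1)}(R\langle X \rangle) \dots T^{(m_k)}(R\langle X \rangle)$ onto $T^{(m_1)}(F\langle X \rangle) \dots T^{(m_k)}(F\langle X \rangle)$.

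Next I would invoke Theorem \ref{D} for the field $F$. Combined with the equivalence of Problems 1 and 2 (existence of an $F$-algebra violating the inclusion is equivalent to the failure of the inclusion in the free algebra $F\langle X \rangle$ itself), this yields an element $\bar w \in T^{(m_1)}(F\langle X \rangle) \dots T^{(m_k)}(F\langle X \rangle)$ with $\bar w \notin T^{(1+N_k)}(F\langle X \rangle)$. Because $\pi$ maps the product ideal over $R$ onto the product ideal over $F$, I may choose a preimage $w \in T^{(m_1)}(R\langle X \rangle) \dots T^{(m_k)}(R\langle X \rangle)$ with $\pi(w) = \bar w$.

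It remains to verify that $w \notin T^{(1+N_k)}(R\langle X \rangle)$. This is the crux, and it works precisely because we are detecting non-membership through a surjection rather than attempting to lift membership: if $w$ did belong to $T^{(1+N_k)}(R\langle X \rangle)$, then $\bar w = \pi(w)$ would belong to $\pi\bigl(T^{(1+N_k)}(R\langle X \rangle)\bigr) = T^{(1+N_k)}(F\langle X \rangle)$, contradicting the choice of $\bar w$. Hence the free algebra $A := R\langle X \rangle$, which is itself an $R$-algebra, satisfies $(\ref{notin-odd})$, and the corollary follows.

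I do not expect a serious obstacle here: the argument deliberately sidesteps any flatness or torsion difficulties, since the pushforward of an ideal along a surjective homomorphism is automatic, whereas lifting membership would require exactness of $R\otimes_{\mathbb Z} -$ and could fail for rings such as $R = \mathbb{Z}/6\mathbb{Z}$. The only ingredients needed are the naturality of commutators under $\pi$ (so that $\pi$ transports the relevant $T$-ideals onto one another) and the equivalence of Problems 1 and 2, used to transfer Theorem \ref{D} from an abstract $F$-algebra to the concrete free algebra $F\langle X \rangle$.
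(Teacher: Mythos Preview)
Your argument is correct, but it takes a different route from the paper's. The paper simply observes that any $F$-algebra $A$ (where $F=R/\mathfrak{m}$) can be regarded as an $R$-algebra by restriction of scalars along $R\to F$, via $r\cdot a=(r+\mathfrak{m})\cdot a$; since each ideal $T^{(n)}(A)$ is generated as a two-sided \emph{ring} ideal by commutators, it does not depend on which coefficient ring one views $A$ over, and so the $F$-algebra furnished by Theorem~\ref{D} already witnesses (\ref{notin-odd}) as an $R$-algebra. Your approach instead stays inside free algebras: you invoke the equivalence of Problems~1 and~2 over $F$ to locate the witness in $F\langle X\rangle$, then lift it through the surjection $R\langle X\rangle\to F\langle X\rangle$. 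This works perfectly well and has the minor bonus of producing a single explicit $R$-algebra $A=R\langle X\rangle$ independent of the $m_i$, at the cost of one extra ingredient (the Problem~1/Problem~2 equivalence) that the paper's restriction-of-scalars argument does not need.
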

\begin{proof} Suppose that $R$ is not a field. Let $M$ be a maximal ideal of $R$ (by Zorn's lemma, such an ideal $M$ exists). Then $F = R/M$ is a field and the $F$-algebra $A$ of Theorem \ref{D} can be viewed in a natural way as an $R$-algebra (with $r \cdot a$ defined by $r \cdot a = (r + M) \cdot a$ for $r \in R, a \in A$). Since $A$ satisfies (\ref{notin-odd}), the result follows.
\end{proof} 

Let $N$ be the integer defined in Problems 1 and 2. If $\frac{1}{6} \in R$ and all the integers $m_1, \dots , m_k$ are \textit{odd} then $N=N_k$. Indeed, it follows from Proposition \ref{odd} and 
Corollary \ref{D-corollary}  that  in this case we always have 
\begin{equation*}
\label{iscontained}
T^{(m_1)} (A) \dots T^{(m_k)} (A) \subseteq T^{(N_k)} (A) 
\end{equation*}
and, in general, 
\begin{equation*}
\label{isnotcontained}
T^{(m_1)} (A) \dots T^{(m_k)} (A) \nsubseteq T^{(1 + N_k)} (A) .
\end{equation*}

Suppose that $\ell$ of the integers $m_1, \dots , m_k$ are odd $( \ell < k)$ and $(k-\ell ) >0 $ of them are even. Let $\frac{1}{6} \in R$.  Then, by Proposition  \ref{lemmalowerbound}, $N_{k, \ell} \le N$ and, by Corollary \ref{D-corollary}, $N \le N_k$. If $\ell = k-1$ (that is, $k-1$ of the integers $m_1, \dots , m_k$ are odd and one of them is even) then $N_{k, k-1}  = N_k $ so $N = N_k$. However, if $0 \le \ell <k-1$ then $N_{k, \ell} = N_k - (k - \ell -1) < N_k$ so one can only deduce from the results above that $N_{k, \ell} \le N \le N_k$.

Our main result is as follows.
\begin{theorem}
\label{maintheorem1}
Let $F$ be a field. Let $k, \ell $ be integers, $0 \le \ell \le k$. Let $m_1, \dots , m_k$ be positive integers such that $\ell$ of them are odd and $k - \ell $ of them are even and let $N_{k, \ell}$ be as above. Then there exists a unital associative $F$-algebra $A$ such that
\begin{equation}
\label{notin-even}
T^{(m_1)} (A) \dots T^{(m_k)} (A) \nsubseteq T^{(1 + N_{k, \ell})} (A).
\end{equation}
\end{theorem}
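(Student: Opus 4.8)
The plan is to reduce the statement to a non-containment inside the free algebra and then to exhibit an explicit witness. First I would pass to $F\langle X\rangle$: since for ideals $I,J$ of a ring and a quotient map $\pi$ one has $\pi(I)\pi(J)=\pi(IJ)$, and since $T^{(m)}(\pi(F\langle X\rangle))=\pi(T^{(m)})$, it suffices to find a single element $w\in T^{(m_1)}\cdots T^{(m_k)}$ with $w\notin T^{(1+N_{k,\ell})}$ in $F\langle X\rangle$; taking $A=F\langle X\rangle/T^{(1+N_{k,\ell})}$ then gives $T^{(1+N_{k,\ell})}(A)=0$ while the image of $w$ is a nonzero element of the product, which is exactly (\ref{notin-even}). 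Equivalently, it is enough to produce one concrete unital $F$-algebra $A$ that is Lie nilpotent of class exactly $N_{k,\ell}$ (so that $T^{(1+N_{k,\ell})}(A)=0$) together with a nonzero product of commutators of lengths $m_1,\dots,m_k$ in $A$.

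For the element I would take $w=c_1c_2\cdots c_k$, where $c_i=[x_{i,1},\dots,x_{i,m_i}]$ is a left-normed commutator in its own block of fresh variables, so that $w\in T^{(m_1)}\cdots T^{(m_k)}$ is immediate and, by Theorems \ref{LGL}, \ref{BJ} and Proposition \ref{lemmalowerbound}, the image of $w$ already lies in $T^{(N_{k,\ell})}$. The content is that this image is nonzero in the top layer $T^{(N_{k,\ell})}/T^{(N_{k,\ell}+1)}$. The guiding model is the Grassmann algebra $E$: there every commutator lands in the central even part, so a product of $k$ length-$2$ commutators equals $2^k e_1\cdots e_{2k}\neq 0$ while $T^{(3)}(E)=0$. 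This is precisely the all-even, all-$m_i=2$ instance ($N_{k,0}=2$), and it explains the parity bookkeeping: an even-length commutator can be closed up into a central channel at the cost of $m_i-2$ beyond a fixed base, whereas an odd-length commutator must retain an odd component and costs $m_i-1$, which is the source of the $+\ell$ in $N_{k,\ell}=\sum_i(m_i-2)+\ell+2$.

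To realize this for general lengths (where $E$ itself fails, since $T^{(m_i)}(E)=0$ once $m_i\ge 3$) I would build the witness $A$ from gadgets, one per factor, glued along a common one-dimensional ``top'' $Fz$ into which all products are funnelled. I would grade $A$ so that each gadget contributes a homogeneous nonzero value for its commutator $c_i$, returning even gadgets into a central channel for free and odd gadgets at the price of one extra grading unit; the product $c_1\cdots c_k$ is then forced into $z$. The grading is arranged so that the Lie nilpotency class of $A$ is exactly $N_{k,\ell}$, i.e.\ every left-normed commutator of length $1+N_{k,\ell}$ vanishes, whence $T^{(1+N_{k,\ell})}(A)=0$; since $z\neq 0$, this already yields $z\notin T^{(1+N_{k,\ell})}(A)$. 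Checking associativity of this blockwise algebra and that $w\mapsto z\neq 0$ is then a finite verification, valid in every characteristic, which matches the hypothesis that $F$ is an arbitrary field.

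The main obstacle is the simultaneous tightness. For class-$2$ algebras the separation of ``a product of short commutators'' from ``a single long commutator'' is automatic, because all long commutators vanish; but for class $N_{k,\ell}>2$ one must keep nonzero commutators of every length up to $N_{k,\ell}$ and still guarantee that the specific product $c_1\cdots c_k$ is not swallowed by the ideal generated by the length-$(1+N_{k,\ell})$ commutators. Concretely, I must show that no relation of the algebra lets the top value $z$ be rewritten through a longer commutator, and that the grading caps the class at exactly $N_{k,\ell}$ with the parity-dependent count $\sum_i(m_i-2)+\ell$ rather than the naive $\sum_i(m_i-1)$. This is precisely the point at which $\ell$, the number of odd $m_i$, enters, and where the construction must be arranged so that even blocks genuinely save two degrees while odd blocks save only one. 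This is the sharp counterpart of Theorems \ref{LGL} and \ref{BJ}, and it is where essentially all of the work lies.
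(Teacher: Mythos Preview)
Your proposal is a strategy outline, not a proof: you correctly reduce to constructing a unital $F$-algebra $A$ with $T^{(1+N_{k,\ell})}(A)=0$ and a nonzero product $[v_{11},\dots,v_{1m_1}]\cdots[v_{k1},\dots,v_{km_k}]$, and you correctly diagnose that the parity bookkeeping is where the $+\ell$ comes from. But the actual construction --- ``gadgets, one per factor, glued along a common one-dimensional top $Fz$'' --- is never specified, and you yourself say that verifying associativity, the class bound, and nonvanishing ``is where essentially all of the work lies.'' As written, nothing has been proved; you have restated the content of Theorem~\ref{maintheorem2} and identified the difficulty without resolving it. In particular, it is not at all clear that a one-dimensional ``top'' suffices: the product of commutators must survive, but so must all shorter long commutators (else the class would drop), and a single sink $Fz$ gives you no mechanism to separate ``product of $k$ commutators of lengths $m_i$'' from ``single commutator of length $1+N_{k,\ell}$'' once both land in the same one-dimensional space.

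The paper's construction is quite different from your gadget-gluing idea and avoids this obstacle entirely. In characteristic $\ne 2$ one takes $A=E\otimes E_r$ with $r=N_{k,\ell}-2$, where $E$ is the infinite Grassmann algebra and $E_r$ the $r$-generated one; both factors are Lie nilpotent of class $2$. A lemma (Lemma~\ref{cl}) computes $[g_1\otimes h_1,\dots,g_n\otimes h_n]$ explicitly in any tensor product of two class-$2$ algebras, and its corollary shows that $A$ is Lie nilpotent of class $r+2=N_{k,\ell}$ because products of $\tfrac{r+2}{2}$ commutators vanish in $E_r$. The parity accounting you anticipated is realised by choosing $v_{i1}=e_{i1}\otimes 1$ always, $v_{im_i}=e_{im_i}\otimes 1$ when $m_i$ is even, and $v_{ij}=e_{ij}\otimes e'_{ij}$ otherwise: by Corollary~\ref{nilp2} the resulting commutator $[v_{i1},\dots,v_{im_i}]$ consumes $m_i-2$ generators of $E_r$ when $m_i$ is even and $m_i-1$ when $m_i$ is odd, so the total is exactly $r$, and the product equals a nonzero scalar times $e_1\cdots e_{\mathcal N}\otimes e_1\cdots e_r$. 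Characteristic $2$ requires a separate substitute for $E$ (a quotient of a group algebra of a class-$2$ group), but the tensor-product skeleton and Lemma~\ref{cl} are the same. The point is that the Lie-nilpotency bound and the nonvanishing are decoupled into the two tensor factors, which is what your single-sink picture does not provide.
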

In a particular case when $k=2$ and $m_1$, $m_2$ are even Theorem \ref{maintheorem1} has been recently proved by Grishin and Pchelintsev \cite{GrishinPchel15} and independently by the authors of the present article \cite{DK17}. In another particular case when $m_1=m_2 = \dots = m_{k-1} =2$ and $m_k$ is even this theorem has been proved by Grishin, Tsybulya and Shokola \cite[Theorem 3]{GTS11}.

The proof of the following result is similar to that of Corollary \ref{D-corollary}.
\begin{corollary}
\label{maintheorem1-corollary}
Let $R$ be an arbitrary unital associative and commutative ring and let $k$, $\ell$, $m_1, \dots , m_k$, $N_{k, \ell}$ be as in Theorem \ref{maintheorem1}. Then there exists an associative $R$-algebra $A$ such that (\ref{notin-even}) holds.
\end{corollary}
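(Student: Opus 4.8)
The plan is to follow the proof of Corollary~\ref{D-corollary} almost verbatim, reducing the statement for a general ring $R$ to the case of a field, where Theorem~\ref{maintheorem1} applies directly. If $R$ happens to be a field, one simply takes $F = R$ and invokes Theorem~\ref{maintheorem1}, which already produces a unital associative $F$-algebra $A$ satisfying (\ref{notin-even}). So the only genuine work is in the case when $R$ is not a field.

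In that case I would use Zorn's lemma to choose a maximal ideal $M$ of $R$, so that $F = R/M$ is a field. Theorem~\ref{maintheorem1} then furnishes a unital associative $F$-algebra $A$ for which (\ref{notin-even}) holds. I would then regard $A$ as an $R$-algebra by letting each $r \in R$ act through the quotient map, namely $r \cdot a = (r + M) \cdot a$ for $r \in R$, $a \in A$.

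The single point requiring verification --- and the place on which I would concentrate --- is that, for every $n$, the ideal $T^{(n)}(A)$ is the \emph{same} subset of $A$ whether $A$ is regarded as an $F$-algebra or as an $R$-algebra. I expect no real obstacle here, since the argument is elementary. The commutators $[a_1, \dots , a_n]$ are built solely from the associative multiplication of $A$, which is unchanged when one replaces $F$-scalars by $R$-scalars; hence the generating sets of the two-sided ideals $T^{(n)}(A)$ coincide in the two structures. Moreover, because the $R$-action factors through $F$, the $R$-submodule generated by any subset of $A$ equals the $F$-submodule it generates (equivalently, $R \cdot 1_A = F \cdot 1_A$ inside $A$). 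Consequently the two-sided ideals generated by the same set agree, so $T^{(n)}(A)$ is unaffected by the change of scalar ring, and the non-containment (\ref{notin-even}) transfers from the $F$-algebra structure to the $R$-algebra structure without alteration. The entire content of the corollary is precisely this observation that commutators and module spans are insensitive to the passage from $F$ to $R$.
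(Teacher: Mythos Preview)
Your proposal is correct and follows exactly the approach indicated in the paper, which merely states that the proof is similar to that of Corollary~\ref{D-corollary}. Your additional remark that $T^{(n)}(A)$ is the same subset of $A$ under either scalar structure (because commutators depend only on the ring multiplication and the $R$-action factors through $F$) makes explicit the one point the paper leaves implicit.
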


It follows that if $\frac{1}{6} \in R$ and at least one of the integers $m_i$ is even then $N = N_{k, \ell}$ because, by Proposition \ref{lemmalowerbound}  and  Corollary \ref{maintheorem1-corollary},  in this case we always have
\[
T^{(m_1)} (A) \dots T^{(m_k)} (A) \subseteq T^{(N_{k, \ell} )} (A) 
\]
but, in general, 
\[
T^{(m_1)} (A) \dots T^{(m_k)} (A) \nsubseteq T^{(1 + N_{k, \ell})} (A).
\]

Thus, the solution of Problems 1 and 2 (for $R$ that contains $\frac{1}{6}$) is as follows. Let $R$ be a unital associative and commutative ring such that $\frac{1}{6} \in R$ and let $k, m_1, \dots , m_k$ be positive integers.  Then
\[
N = \left\{
\begin{array}{ll}
N_k = \sum_{i=1}^k m_i - k+1  & \mbox{ if all integers $m_i$ are odd (Dangovski \cite{Dangovski15})};
\\

\\
N_{k, \ell} = \sum_{i=1}^k m_i - 2k + \ell + 2 & 
\mbox{ 
\begin{minipage} {60mm} 
if $\ell <k$ of the integers $m_i$ are odd and $k-\ell$ of them are even.
\end{minipage}
}
\end{array}
\right .
\]

Recall that an associative algebra $A$ is Lie nilpotent of class at most $c$ if $[u_1, \dots , u_c , u_{c+1}]=0$ for all $u_i \in A$. Theorem \ref{maintheorem1} follows immediately from the following result.

\begin{theorem}
\label{maintheorem2}
Under the hypotheses of Theorem \ref{maintheorem1}, there exists a  unital associative $F$-algebra $A$ such that the following two conditions are satisfied:

i) $T^{(1 + N_{k, \ell})} (A) = 0$, that is, the algebra $A$ is Lie nilpotent of class at most $N_{k, \ell};$

ii) there are $v_{ij} \in A$ such that 
\[
[v_{11}, \dots , v_{1 m_1}] \dots [v_{k1}, \dots , v_{k m_k}]  \ne 0.
\]
\end{theorem}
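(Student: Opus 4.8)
The plan is to prove Theorem \ref{maintheorem2} directly, by exhibiting a concrete unital $F$-algebra $A$ together with elements $v_{ij}\in A$; Theorem \ref{maintheorem1} then follows at once. To verify the two required conditions it is enough to build $A$ so that it is transparently Lie nilpotent of class at most $N_{k,\ell}$ (condition (i), i.e.\ $T^{(1+N_{k,\ell})}(A)=0$) while some product $[v_{11},\dots,v_{1m_1}]\cdots[v_{k1},\dots,v_{km_k}]$ is nonzero (condition (ii)). Such an $A$ automatically certifies the non-containment in Theorem \ref{maintheorem1}, and is equivalent to saying that the corresponding product of commutators on distinct free generators does not lie in $T^{(1+N_{k,\ell})}(F\langle X\rangle)$, since that $T$-ideal maps to $T^{(1+N_{k,\ell})}(A)=0$ under $x\mapsto v_{ij}$. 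I will work with concrete, finite-dimensional algebras, which makes the class bound easy to see and reduces everything to certifying a single nonzero value.

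The construction is guided by the identity
\[
N_{k,\ell}=2+\sum_{m_i\ \mathrm{even}}(m_i-2)+\sum_{m_i\ \mathrm{odd}}(m_i-1),
\]
which suggests that each even commutator should cost $m_i-2$ units of Lie nilpotency class, each odd commutator $m_i-1$, over a base of $2$. I would therefore assemble $A$ from one elementary block per commutator. For a commutator of length $m$ the natural block is the truncated free algebra $B_m=F\langle z_1,\dots,z_m\rangle/I$, where $I$ is the ideal spanned by the monomials of degree exceeding $m$: it is finite-dimensional, Lie nilpotent of class exactly $m$, and satisfies $[z_1,\dots,z_m]\ne 0$. For an even commutator of length $2$ one may instead use the Grassmann algebra, in which $[e_1,e_2]=2e_1e_2\ne 0$ already in class $2$. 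Choosing the $v_{ij}$ to be the generators of the $i$-th block (placed in distinct tensor factors), the $i$-th bracket evaluates to a nonzero element $c_i$ of its block, and the whole product evaluates to $c_1\otimes\cdots\otimes c_k$, which is nonzero as soon as each $c_i\ne 0$ and the amalgamation keeps tensors of nonzero elements nonzero.

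The core is to amalgamate the blocks so that the Lie nilpotency class comes out to exactly $N_{k,\ell}$. The basic mechanism is the containment
\[
T^{(n)}(A'\otimes A'')\subseteq\sum_{p+q=n+1}T^{(p)}(A')\otimes T^{(q)}(A''),
\]
proved by induction from the identity $[a\otimes b,a'\otimes b']=[a,a']\otimes bb'+a'a\otimes[b,b']$; it shows that an ordinary tensor product combines classes subadditively with a saving of one per factor, so that a tensor product of the blocks $B_{m_1},\dots,B_{m_k}$ is Lie nilpotent of class at most $\sum_i m_i-k+1=N_k$. This already reproves the lower bound of Theorem \ref{D} over an arbitrary field. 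To gain the extra $k-\ell-1$ separating $N_{k,\ell}$ from $N_k$ when even commutators are present, the blocks attached to the even commutators must be merged by a $\mathbb Z_2$-graded variant of the tensor product, under which an even-length bracket becomes an even (central-type) element; the prototype is $E\mathbin{\hat\otimes}E\cong E$, which stays in class $2$ rather than passing to class $3$ as the ordinary tensor square does. I would formulate and prove the graded analogue of the displayed containment, sensitive to the parities of the factors, and read off the sharp bound $N_{k,\ell}$ from it.

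The main obstacle is exactly this sharp class control for the even commutators: proving that the graded amalgamation of the even blocks is Lie nilpotent of class $2+\sum_{m_i\ \mathrm{even}}(m_i-2)$, and not one more, while simultaneously keeping the product of the corresponding brackets nonzero. This is the multiparameter generalization of the phenomenon resolved for $k=2$ with both $m_i$ even in \cite{DK17,GrishinPchel15}, and the delicate points inherited from there are (a) choosing the graded merging so that no spurious collapse kills the product, and (b) certifying the surviving nonzero element by tracking a single top monomial, uniformly over fields of arbitrary characteristic (the hypotheses of Theorem \ref{maintheorem2} impose no restriction on $\operatorname{char}F$). Thus the nonvanishing in (ii) and the vanishing giving (i) are two sides of one explicit computation, and making them meet precisely at $N_{k,\ell}$ is where the work lies.
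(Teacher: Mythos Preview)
Your proposal is a strategic outline rather than a proof: you correctly isolate the combinatorial target $N_{k,\ell}=2+\sum_{m_i\ \mathrm{even}}(m_i-2)+\sum_{m_i\ \mathrm{odd}}(m_i-1)$ and you see that an ordinary tensor product of $k$ blocks only reaches $N_k$, but the decisive step---the ``$\mathbb Z_2$-graded variant of the tensor product'' that is supposed to recover the extra $k-\ell-1$---is announced, not performed. You yourself write that ``this is where the work lies,'' and indeed nothing in the proposal tells the reader how to grade the blocks $B_{m_i}$, how to formulate and prove the graded analogue of $T^{(n)}(A'\otimes A'')\subseteq\sum_{p+q=n+1}T^{(p)}(A')\otimes T^{(q)}(A'')$, or why the resulting bound is exactly $N_{k,\ell}$. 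Until that is written down, neither (i) nor (ii) is established. A second, related gap: your running prototype is the Grassmann algebra and the identity $[e_i,e_j]=2e_ie_j$, which collapses in characteristic $2$; you flag the issue but give no substitute, so the proposal currently says nothing when $\operatorname{char}F=2$.

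It is worth noting that the paper's route is structurally quite different from yours and avoids both difficulties at once. Instead of one block per commutator, the paper takes a \emph{two}-factor tensor product $A=G\otimes H$ with \emph{both} $G$ and $H$ Lie nilpotent of class~$2$. An explicit closed formula for $[g_1\otimes h_1,\dots,g_n\otimes h_n]$ (Lemma~\ref{cl}) then shows that the class of $A$ is governed solely by how many products $[f_1,f_2]\cdots[f_{2s-1},f_{2s}]$ survive in $H$; choosing $H$ with exactly $r=N_{k,\ell}-2$ generators forces $T^{(1+N_{k,\ell})}(A)=0$ immediately. The same formula, specialized to inputs of the shape $g\otimes 1$ at the first and (for even $m_i$) last slots, makes each bracket $[v_{i1},\dots,v_{im_i}]$ a single explicit tensor, so the nonvanishing in (ii) reduces to tracking one monomial. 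For $\operatorname{char}F\neq 2$ the pair $(G,H)=(E,E_r)$ suffices; for $\operatorname{char}F=2$ the paper replaces the Grassmann algebra by a quotient of a group algebra of an extraspecial-type $2$-group. Compared with your plan, this buys a uniform, short computation with no graded machinery and no $k$-fold tensor bookkeeping; what your approach would buy, if completed, is a more modular picture in which each commutator lives in its own factor, but making the class come out \emph{exactly} at $N_{k,\ell}$ along that route is precisely the part you have not yet done.
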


To prove Theorem \ref{maintheorem2} we  use the same algebra $A$ that was used in \cite[Theorem 1.4]{DK17}.

\bigskip
\noindent
\textbf{Remarks.}
1. Both Theorem \ref{D} and Theorem \ref{maintheorem1} are valid for arbitrary $k$-tuples $m_1, m_2, \dots , m_k$ of positive integers. However, if $\ell = k$ (that is, if all $m_i$ are odd) then Theorem \ref{D} gives a stronger result than Theorem \ref{maintheorem1}  because $N_{k,k} = N_k +1 > N_k$ and therefore $T^{(1 + N_{k, k})} (A) \subset T^{(1 + N_k)} (A)$. If $\ell = k-1$ (that is, if one of the integers $m_1, m_2, \dots , m_k$ is even and $k-1$ of them are odd) then $N_{k, k-1} = N_k$ so the results of Theorem \ref{D} and Theorem \ref{maintheorem1} coincide; and if $\ell < k-1$ (that is, if two or more of the integers $m_1, m_2, \dots , m_k$ are even) then $N_{k, \ell} = N_k - (k - \ell - 1) < N_k$ so Theorem \ref{D} gives a weaker result than Theorem \ref{maintheorem1}. 

2. The proofs of Theorem \ref{BJ} given in \cite{BJ10}, \cite{GrishinPchel15} and \cite{SS90} are valid for algebras over  an associative and commutative unital ring $R$ such that $\frac{1}{6} \in R$. However, the proof given in \cite{BJ10} can be slightly modified to become also valid over any $R$ such that $\frac{1}{3} \in R$ (see \cite[Remark 3.9]{AE15} for explanation). Moreover, for some specific $m$ and $n$ Theorem \ref{BJ} holds over an arbitrary ring $R$: for instance, $T^{(3)} (A) T^{(3)} (A) \subset T^{(5)} (A)$ for any algebra $A$ over any associative and commutative unital ring $R$ (see \cite[Lemma 2.1]{CostaKras13}). However, in general Theorem \ref{BJ} fails over $\mathbb Z$ and over a field of characteristic $3$: it was shown in \cite{DK15,Kr13} that in this case $T^{(3)} T^{(2)} \nsubseteq T^{(4)}$ and moreover, $T^{(3)} \bigl( T^{(2)}\bigr) ^{\ell} \nsubseteq T^{(4)}$ for all $\ell \ge 1$.

3. In 1978  Volichenko proved Theorem \ref{BJ} for $m=3$ and arbitrary $n$ in the preprint \cite{Volichenko78} written in Russian. In 1985 Levin and Sehgal \cite{LS85} independently rediscovered Volichenko's result. More recently Etingof, Kim and Ma \cite{EKM09} and Gordienko \cite{Gordienko07} have independently proved this theorem for small $m$ and $n$; these authors were unaware of the results of \cite{LS85,Volichenko78}.


\section{Proofs of Proposition \ref{lemmalowerbound} and Theorem \ref{maintheorem2} }

\begin{proof}[Proof of Proposition \ref{lemmalowerbound}] Induction on $k$. If $k=1$ then $\ell = 0$ so $N_{1, 0} = m_1$ and (\ref{lowerbound}) holds.

Suppose that $k>1$ and for all products of less than $k$ terms $T^{(m_i)} (A)$ the proposition has already been proved. We split the proof in 3 cases.

Case 1. Suppose that $m_k$ is odd. Then for some $i$ such that $1 \le i < k$ the number $m_i$ is even so we can apply the induction hypothesis to the product $T^{(m_1)} (A) \dots T^{(m_{k-1})} (A)$. By this hypothesis, 
\[
T^{(m_1)} (A) \dots T^{(m_{k-1})} (A) \subseteq T^{(N')} (A)
\]  
where $N' = \sum_{i=1}^{k-1} m_i - 2(k-1) +(\ell -1) +2 = \sum_{i=1}^{k-1} m_i -2k + \ell +3$. By Theorem \ref{BJ}, 
\[
T^{(N')} (A) \ T^{(m_k)}(A) \subseteq T^{(N' +m_k -1)} (A) = T^{(N_{k, \ell})} (A)
\]
since $N' + m_k -1 = \sum_{i=1}^k m_i -2k + \ell +2 = N_{k, \ell}$. Thus, in this case (\ref{lowerbound}) holds, as required.

Case 2. Suppose that $m_k$ is even and, for some $i$ such that $1 \le i <k$, $m_i$ is also even. Then we can apply the induction hypothesis to the product $T^{(m_1)} (A) \dots T^{(m_{k-1})} (A)$ so
\[
T^{(m_1)} (A) \dots T^{(m_{k-1})} (A) \subseteq T^{(N'')} (A)
\]
where $N'' =  \sum_{i=1}^{k-1} m_i - 2(k-1) +\ell +2 = \sum_{i=1}^{k-1} m_i -2k + \ell +4$. By Theorem \ref{LGL}, 
\[
T^{(N'')} (A) \ T^{(m_k)} (A) \subseteq T^{(N'' + m_k -2)} (A)  = T^{(N_{k, \ell})} (A)
\]
since $N'' + m_k -2 =  \sum_{i=1}^{k} m_i -2k + \ell +2 = N_{k, \ell}$. Hence, in this case (\ref{lowerbound}) holds, as required.

Case 3. Suppose that $m_k$ is even and all $m_i$ for $1 \le i < k$ are odd. Applying Theorem \ref{BJ} $k-1$ times, we get
\[
T^{(m_1)} (A) \dots T^{(m_{k-1})} (A) \ T^{(m_k)} (A) \subseteq T^{(m_1 + \dots + m_k - k +1)} (A) = T^{(N_{k, k-1} )} (A)
\]
since $\sum_{i=1}^k m_i -k + 1 = \sum_{i=1}^k m_i -2k + (k-1) +2 = N_{k, k-1}$. Thus, in this case (\ref{lowerbound}) also holds.

The proof of Proposition \ref{lemmalowerbound} is completed.
\end{proof}

The proof of  Theorem \ref{maintheorem2}  below is a modification of the proof of \cite[Theorem 1.4]{DK17}. First we need some auxiliary results.

Let $G$ and $H$ be unital associative algebras over a field $F$ such that $[g_1, g_2, g_3] = 0$, $[h_1, h_2, h_3] =0$ for all $g_i \in G$, $h_j \in H$. Note that each commutator $[g_1, g_2]$ $(g_i \in G)$ is central in $G$, that is, $[g_1, g_2] g = g [g_1, g_2]$ for each $g \in G$. Similarly, each commutator $[h_1,h_2]$ $(h_j \in H)$ is central in $H$. The following lemma has been proved in \cite[Lemma 2.1]{DK17} by induction on $n$.

\begin{lemma}[see \cite{DK17}]
\label{cl}
Let
\[
c_{\ell} = [ g_1 \otimes h_1, g_2 \otimes h_2, \dots , g_{\ell} \otimes h_{\ell}] 
\]
where $\ell \ge 2, g_i \in G, h_j \in H$. Then
\begin{align*}
c_2 = & \ [g_1,g_2] \otimes h_1 h_2 + g_2 g_1 \otimes [h_1, h_2],
\\
c_{2n} = & \ [g_1,g_2] [g_3,g_4] \dots [g_{2n-1},g_{2n}] \otimes [h_1 h_2, h_3] [h_4,h_5] \dots [h_{2n-2}, h_{2n-1}] h_{2n} 
\\
+ & \ [g_2 g_1, g_3] [g_4,g_5] \dots [g_{2n-2}, g_{2n-1}] g_{2n} \otimes [h_1, h_2] [h_3,h_4] \dots [h_{2n-1}, h_{2n}] 
\\
& (n >1),
\\
c_{2n+1} = & \ [g_1, g_2] [g_3,g_4]  \dots [g_{2n-1}, g_{2n}] g_{2n+1} \otimes [h_1 h_2, h_3][h_4,h_5] \dots [h_{2n}, h_{2n+1}]
\\
+ & \ [g_2 g_1, g_3] [g_4, g_5] \dots [g_{2n}, g_{2n+1}] \otimes [h_1, h_2] [h_3,h_4] \dots [h_{2n-1}, h_{2n}] h_{2n+1} 
\\
& (n \ge 1).
\end{align*}
\end{lemma}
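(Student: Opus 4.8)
The plan is to prove the three formulas by induction on the length $\ell$, advancing one step at a time through the recursion $c_\ell = [c_{\ell-1}, g_\ell \otimes h_\ell]$. Everything happens in the tensor product algebra $A = G \otimes H$, where $(g\otimes h)(g'\otimes h') = gg'\otimes hh'$, so the basic commutator rule reads
\[
[u \otimes v,\, g \otimes h] = ug \otimes vh - gu \otimes hv \qquad (u,g \in G,\ v,h \in H).
\]
The conceptual input is the hypothesis $[x,y,z]=0$ for all $x,y,z$ in $G$ (resp. in $H$): it says that every left-normed commutator of length $\ge 2$ of elements of $G$ (resp. $H$) is central, and hence so is any product of such commutators. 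In particular $[g_i,g_j]$, $[g_ig_j,g_k]$ and their products are central in $G$; likewise $[h_1h_2,h_3]$, a commutator with a product inside, is central in $H$, since $[[h_1h_2,h_3],h]$ is a length-$3$ commutator of the elements $h_1h_2,h_3,h \in H$ and therefore vanishes.

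First I would isolate two transfer rules, immediate from the basic commutator rule together with centrality. If $p \in G$ is central then, using $pg = gp$,
\[
[p \otimes q,\, g \otimes h] = pg \otimes [q,h] \qquad \text{(A)};
\]
dually, if $q \in H$ is central then $[p \otimes q, g \otimes h] = [p,g] \otimes qh$, call this (B). These are exactly the moves that drive the induction: in each summand of the claimed formulas for $c_{2n}$ and $c_{2n+1}$ one tensor factor is central (a product of commutators) while the other is a product of commutators times a single ``free'' letter, so the appropriate rule either multiplies that free letter into the $g$-side or absorbs the new letter into $[q,h]$ on the $h$-side.

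For the base case I would compute $c_2$ directly and check it equals $[g_1,g_2]\otimes h_1h_2 + g_2g_1 \otimes [h_1,h_2]$. The inductive step then splits by parity. Given the formula for $c_{2n}$ (for $n=1$ this is $c_2$), I apply rule (A) to its first summand and rule (B) to its second; extracting the central factor via $[wx,y] = w[x,y]$, valid because $w$ is central, converts $[(\cdots)h_{2n}, h_{2n+1}]$ into $(\cdots)[h_{2n},h_{2n+1}]$ and $[(\cdots)g_{2n}, g_{2n+1}]$ into $(\cdots)[g_{2n},g_{2n+1}]$, yielding precisely the two summands of $c_{2n+1}$. The step $c_{2n+1} \to c_{2n+2} = c_{2(n+1)}$ is symmetric, with the roles of (A) and (B) interchanged.

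The only genuinely delicate point is the very first step $c_2 \to c_3$ (the case $n=1$): here the $h$-side factor of the first summand is the bare product $h_1h_2$ rather than ``central times one free letter,'' so $[h_1h_2,h_3]$ is produced whole and, being central by the remark above, seeds the leading factor $[h_1h_2,h_3]$ that persists in every later even and odd formula; symmetrically $g_2g_1$ produces $[g_2g_1,g_3]$. From $c_3$ onward every summand already has the uniform shape in which one tensor factor is central and the other is a central product of commutators times a single free letter, so the induction proceeds mechanically. Thus the main obstacle is not conceptual but bookkeeping: tracking which tensor factor is central at each stage and matching the index ranges so that the newly introduced pair $(g_\ell,h_\ell)$ lands in the correct bracket.
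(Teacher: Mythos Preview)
Your proposal is correct and follows essentially the same approach the paper indicates: the paper simply cites \cite{DK17} and states that the lemma ``has been proved \dots\ by induction on $n$,'' and your induction on $\ell$ with the two transfer rules (A) and (B) (applying the centrality of commutators on one tensor factor at a time) is exactly how that induction goes. The only comment is that your write-up is more detailed than anything appearing in this paper, which defers the proof entirely to \cite{DK17}.
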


\begin{corollary}[see \cite{DK17}]
\label{nilp}
Suppose that 
\begin{equation}
\label{prod}
[f_1, f_2] \dots [f_{2n-1}, f_{2n}] = 0  \qquad \mbox{for all} \ \ f_j \in H.
\end{equation}
Then for all $u_i \in G \otimes H$ we have
\[
[u_1, u_2, \dots , u_{2n+1}] = 0.
\]
\end{corollary}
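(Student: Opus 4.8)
The plan is to deduce the result directly from the explicit expansion of $c_{2n+1}$ provided by Lemma~\ref{cl}. First I would reduce to elementary tensors: the iterated commutator $[u_1, u_2, \dots, u_{2n+1}]$ is multilinear in each of its arguments, and $G \otimes H$ is spanned over $F$ by the elementary tensors $g \otimes h$, so it suffices to prove that $c_{2n+1} = [g_1 \otimes h_1, \dots, g_{2n+1} \otimes h_{2n+1}] = 0$ for all $g_i \in G$ and $h_j \in H$.

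Next I would read off from Lemma~\ref{cl} that $c_{2n+1}$ is a sum of two elementary tensors, and examine the $H$-factor of each. In the second summand this factor is $[h_1, h_2][h_3, h_4] \dots [h_{2n-1}, h_{2n}] h_{2n+1}$, namely a product of exactly $n$ commutators times the single element $h_{2n+1}$. Since every commutator $[h, h']$ is central in $H$, this equals $h_{2n+1} [h_1, h_2] \dots [h_{2n-1}, h_{2n}]$, and the product of $n$ commutators is zero by the hypothesis~(\ref{prod}); hence the second summand vanishes.

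The only point requiring care is the first summand, whose $H$-factor is $[h_1 h_2, h_3][h_4, h_5] \dots [h_{2n}, h_{2n+1}]$: here the leading factor $[h_1 h_2, h_3]$ is a commutator of a \emph{product}, not an honest commutator $[f, f']$. To deal with it I would use the Leibniz identity $[ab, c] = a[b, c] + [a, c]b$ together with the centrality of commutators in $H$ to write $[h_1 h_2, h_3] = h_1 [h_2, h_3] + h_2 [h_1, h_3]$. After this substitution the first summand splits into two products, each of which is a single element of $H$ times $n$ commutators (one arising from the expansion of $[h_1 h_2, h_3]$, the remaining $n - 1$ being $[h_4, h_5], \dots, [h_{2n}, h_{2n+1}]$); invoking centrality once more collects these into a product of $n$ commutators of the form appearing in~(\ref{prod}), which is again zero. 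Thus both summands vanish, so $c_{2n+1} = 0$ on elementary tensors, and multilinearity completes the argument. The sole (and rather mild) obstacle is this handling of $[h_1 h_2, h_3]$; once the Leibniz rule and centrality are applied, everything reduces to the hypothesis that a product of $n$ commutators in $H$ vanishes.
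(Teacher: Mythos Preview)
Your proof is correct and follows essentially the same approach as the paper: reduce to elementary tensors by multilinearity, then apply Lemma~\ref{cl} and the hypothesis~(\ref{prod}) to each summand. Note, however, that your Leibniz expansion of $[h_1 h_2, h_3]$ is unnecessary, since $h_1 h_2$ is itself an element of $H$ and the hypothesis~(\ref{prod}) applies directly with $f_1 = h_1 h_2,\ f_2 = h_3,\ \dots,\ f_{2n} = h_{2n+1}$ (and similarly the second summand vanishes without any rearrangement, as $[h_1,h_2]\cdots[h_{2n-1},h_{2n}]$ is already zero).
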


\begin{proof}
 It follows from (\ref{prod}) and  Lemma \ref{cl} that $[ g_1 \otimes h_1, g_2 \otimes h_2, \dots , g_{2n+1} \otimes h_{2n+1}] = 0$ for all $g_i \in G$, $h_j \in H$. Since each $u_i \in G \otimes H$ is a sum of products of the form $g \otimes h$ ($g \in G$, $h \in H$), we have $[u_1, u_2, \dots , u_{2n+1}] = 0$ for all $u_i \in G \otimes H$, as required.
\end{proof}

The following assertion follows immediately from Lemma \ref{cl}.
\begin{corollary}
\label{nilp2}
Let $v_1 = g_1 \otimes 1$, $v_i = g_i \otimes h_i$ $(i = 2, \dots , 2m'-1)$, $v_{2m'}=g_{2m'} \otimes 1$ and let $w_1 = g_1' \otimes 1$, $w_j = g_j' \otimes h_j'$ $(j = 2, \dots , 2n'+1)$ where $g_i,g_i' \in G$, $h_j, h_j' \in H$. Then
\begin{align*}
[v_1, \dots , v_{2m'}]  =  & \ [g_1,g_2] \dots [g_{2m'-1}, g_{2m'}] \otimes   \ [h_2, h_3] \dots [h_{2m'-2},h_{2m'-1}],
\\
[w_1, \dots , w_{2n' + 1}] = & \ [g_1', g_2'] \dots [g_{2n'-1}', g_{2n'}']  g_{2n'+1}  \otimes \ [h_2',h_3'] \dots [h_{2n'}',h_{2n'+1}'] .
\end{align*}
\end{corollary}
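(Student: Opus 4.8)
The plan is to read off both identities directly from Lemma \ref{cl} by specialising the relevant second tensor-factors to $1$, since that lemma already records the general commutator $c_\ell$ of elements $g_i \otimes h_i$ as a sum of exactly two terms. No fresh computation is needed beyond tracking which of the two summands survives each specialisation. (The boundary cases $m' = 1$ and $n' = 0$ are immediate: the former from the displayed formula for $c_2$, the latter trivially; so I may assume $m' \ge 2$ and $n' \ge 1$ and invoke the formulas for $c_{2m'}$ and $c_{2n'+1}$.)

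First I would treat the even case $[v_1, \dots , v_{2m'}]$. Here $v_1 = g_1 \otimes 1$ and $v_{2m'} = g_{2m'} \otimes 1$, so this is the commutator $c_{2m'}$ of Lemma \ref{cl} (with $n = m'$) under the two specialisations $h_1 = 1$ and $h_{2m'} = 1$. The second summand of $c_{2m'}$ has $H$-component $[h_1, h_2][h_3, h_4] \dots [h_{2m'-1}, h_{2m'}]$, whose leading factor is $[h_1, h_2] = [1, h_2] = 0$; hence the entire second summand vanishes. In the first summand the $G$-component $[g_1, g_2] \dots [g_{2m'-1}, g_{2m'}]$ is untouched, while the $H$-component $[h_1 h_2, h_3][h_4, h_5] \dots [h_{2m'-2}, h_{2m'-1}] h_{2m'}$ collapses: setting $h_1 = 1$ replaces $[h_1 h_2, h_3]$ by $[h_2, h_3]$, and setting $h_{2m'} = 1$ deletes the trailing factor $h_{2m'}$. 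This gives exactly $[g_1, g_2] \dots [g_{2m'-1}, g_{2m'}] \otimes [h_2, h_3] \dots [h_{2m'-2}, h_{2m'-1}]$, as claimed.

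The odd case $[w_1, \dots , w_{2n'+1}]$ is handled the same way: since $w_1 = g_1' \otimes 1$, this is $c_{2n'+1}$ of Lemma \ref{cl} (with $n = n'$) under the single specialisation $h_1' = 1$. Again the second summand dies because its $H$-component begins with $[h_1', h_2'] = [1, h_2'] = 0$, while in the first summand $[h_1' h_2', h_3']$ becomes $[h_2', h_3']$ and nothing else changes, yielding $[g_1', g_2'] \dots [g_{2n'-1}', g_{2n'}'] g_{2n'+1}' \otimes [h_2', h_3'] \dots [h_{2n'}', h_{2n'+1}']$.

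I do not expect any genuine obstacle: the substantive work—the two-term expansion of $c_\ell$—is already carried out in Lemma \ref{cl}, and the corollary follows purely by substitution. The only point requiring care is the bookkeeping, namely recognising that in each case it is precisely the summand whose $H$-component starts with a factor of the form $[h_1 h_2, h_3]$ that survives, and remembering the trailing factor $h_{2m'}$ in the even case. This is why both $h_1 = 1$ and $h_{2m'} = 1$ must be imposed in the even case, whereas the single specialisation $h_1' = 1$ suffices in the odd case, where the surviving $H$-component already terminates in the commutator $[h_{2n'}', h_{2n'+1}']$ with no trailing factor to remove.
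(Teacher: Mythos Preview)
Your argument is correct and is exactly the approach the paper has in mind: the paper merely asserts that the corollary ``follows immediately from Lemma \ref{cl}'', and your specialisations $h_1=1$, $h_{2m'}=1$ (even case) and $h_1'=1$ (odd case) spell out precisely that immediate deduction. Your bookkeeping of which summand survives and how the surviving $H$-component simplifies is accurate, including the boundary cases.
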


\begin{proof}[Proof of Theorem \ref{maintheorem2}] Two cases are to be considered: the case when $char \  F \ne 2$ and the case when $char \ F =2$.

Case 1. Suppose that $F$ is a field of characteristic $\ne 2$. Let $E$ be the unital infinite-dimensional Grassmann (or exterior) algebra over $F$. Then $E$ is generated by the elements $e_i$ $(i = 1, 2, \dots )$ such that $e_i e_j = - e_j e_i$, $e_i^2 = 0$ for all $i, j$ and the set
\[
\mathcal B = \{ e_{i_1} e_{i_2} \dots e_{i_k} \mid k \ge 0, \, i_1 < i_2 < \dots < i_k \}
\]
forms a basis of $E$ over $F$. It is well known and easy to check that $[g_1,g_2,g_3]=0$ for all $g_i \in E$. 

Recall that the $r$-generated unital Grassmann algebra $E_r$ is the unital subalgebra of $E$ generated by $e_1, e_2, \dots , e_r$. Note that $[h_1,h_2,h_3]=0$ for all $h_j \in E_r$. 

Take $A=E \otimes E_r$ where $r = \sum_{i=1}^k m_i -2k +\ell = N_{k, \ell} -2. $ It is easy to check that $r$ is an even integer.  We can apply Lemma \ref{cl} and Corollaries \ref{nilp} and \ref{nilp2} for $G=E$, $H=E_r$.

Note that $[f_1,f_2] \dots [f_{r+1},f_{r+2}]=0$ for all $f_i \in E_r$. Indeed, for all $f, f' \in E_r$ the commutator $[f,f']$ belongs to the linear span of the set $\{ e_{i_1}  \dots e_{i_{2 \ell}} \mid \ell \ge 1, 1 \le i_s \le r \}$.  Hence, $[f_1,f_2] \dots [f_{r+1},f_{r+2}]$ belongs to the linear span of the set $\{ e_{i_1}  \dots e_{i_{2 \ell}} \mid \ell \ge (r+2)/2, 1 \le i_s \le r \}$. Since $2 \ell \ge r+2  > r$, each product $e_{i_1}  \dots e_{i_{2 \ell}}$ above contains equal terms $e_{i_{s}} = e_{i_{s'}}$ $(s < s')$ and, therefore, is equal to $0$. Thus, $[f_1,f_2] \dots [f_{r+1},f_{r+2}]=0$, as claimed. 

Since $N_{k, \ell} = r +2$, we have $[f_1,f_2] \dots [f_{(N_{k, \ell} -1)},f_{N_{k, \ell}}]=0$ for all $f_i \in E_r$. Hence, by Corollary \ref{nilp}, we have $[u_1, \dots , u_{(1 + N_{k, \ell} )}]=0$ for all $u_i \in A = E \otimes E_r$, that is, $T^{(1 + N_{k, \ell})} (A) = 0$, as required. 

Now it suffices to find elements $v_{ij} \in A$ such that 
\begin{equation}
\label{product}
[v_{11}, \dots , v_{1 m_1}] \dots [v_{k1}, \dots , v_{k m_k}]  \ne 0.
\end{equation}
Let 
\[
\mathcal P = \{ (i,j) \mid 1 \le i \le k; \ 1 \le j \le m_i \}.
\]
Note that $v_{ij}$ appears in (\ref{product}) if and only if $(i,j) \in \mathcal P$. Let $ \mathcal N = \sum_{i=1}^k m_i $ and let $\mu : \mathcal P \rightarrow \{ 1,2, \dots , \mathcal N \}$ be a bijection. Define 
\[
e_{ij} = e_{\mu (i,j)} \qquad \bigl( (i,j) \in \mathcal P \bigr) .
\]
Note that 
\begin{equation}
\label{prod1}
\prod_{(i,j) \in \mathcal P} e_{ij} = (-1)^{\delta} e_1 e_2 \dots e_{\mathcal N}
\end{equation}
for some $\delta \in \{ 0, 1 \}$.

Let $\mathcal P' \subset \mathcal P$, 
\[
\mathcal P' = \{ (i',j') \mid 1 \le i' \le k; \ 2 \le j' \le m_i - 1 \mbox{ if $m_i$ is even}; \ 2 \le j' \le m_i \mbox{ if $m_i$ is odd} \} .
\]
Let $\mu' : \mathcal P' \rightarrow \{ 1,2, \dots , \sum_{i=1}^k m_i - 2k + \ell \} = \{ 1,2, \dots , r \}$ be a bijection. Define
\[
e'_{i' j'} = e_{\mu' (i', j')} \qquad \bigl( (i',j' ) \in \mathcal P' \bigr) .
\]
Note that 
\begin{equation}
\label{prod2}
\prod_{(i',j') \in \mathcal P'} e_{i'j'} = (-1)^{\delta'} e_1 e_2 \dots e_r
\end{equation}
for some $\delta' \in \{ 0, 1 \}$.

Define
\begin{align*}
v_{i1}  & =   \ e_{i1} \otimes 1;
\\
v_{ij}   & =    \ e_{ij} \otimes e'_{ij} \qquad (1 \le i \le k; \ 2 \le j \le m_i -1) ;
\\
v_{i m_i} &  = \  \left\{
\begin{array}{ll}
e_{i {m_i}} \otimes 1 & \mbox{if $m_i$ is even};
\\
e_{i {m_i}} \otimes  e'_{i {m_i}} & \mbox{if $m_i$ is odd}.
\end{array}
\right .
\end{align*}
If $m_i$ is even then, by Corollary \ref{nilp2}, 
\begin{align*}
& [v_{i 1}, v_{i 2}, \dots , v_{i m_{i}}] 
\\
= \ & [e_{i1}, e_{i2}] [e_{i3}, e_{i4}] \dots [e_{i (m_i-1)}, e_{i m_i}] \otimes [e'_{i2}, e'_{i3}] [e'_{i4}, e'_{i5}] \dots [e'_{i (m_i - 2)}, e'_{i (m_i - 1)}] .
\end{align*}
Note that $e_{st} e_{s't'} = - e_{s't'} e_{st}$  for all $s, s', t, t'$ so $[e_{st}, e_{s't'}] = 2e_{st} e_{s't'}$. It follows that if $m_i$ is even then
\[
[v_{i 1}, v_{i 2}, \dots , v_{i m_{i}}] = 2^{m_i -1} e_{i1} e_{i2} \dots e_{i m_i} \otimes e'_{i2} e'_{i3} \dots e'_{i (m_i-1)} .
\]
If $m_i$ is odd then, by Corollary \ref{nilp2}, 
\begin{align*}
& [v_{i 1}, v_{i 2}, \dots , v_{i m_{i}}] 
\\
= \ & [e_{i1}, e_{i2}] [e_{i3}, e_{i4}] \dots [e_{i (m_i-2)}, e_{i (m_i -1)}] e_{i m_i} \otimes [e'_{i2}, e'_{i3}] [e'_{i4}, e'_{i5}] \dots [e'_{i (m_i - 1)}, e'_{i m_i }]
\\
= \ & 2^{m_i -1} e_{i1} e_{i2} \dots  e_{i (m_i -1)} e_{i m_i} \otimes e'_{i2} e'_{i3} \dots e'_{i (m_i - 1)} e'_{i m_i } .
\end{align*}
It follows that
\[
[v_{11}, \dots , v_{1 m_1}] \dots [v_{k1}, \dots , v_{k m_k}] 
=  2^{N_k -1} \prod_{i=1}^k \prod_{j=1}^{m_i} e_{ij} \otimes \prod_{i'=1}^k \prod_{j'=2}^{m_{i'}'} e'_{i' j'} 
\]
where 
\[
m'_{i'} = \left\{
\begin{array}{ll}
m_{i'} - 1 & \mbox{ if $m_{i'}$ is even};
\\
m_{i'} & \mbox{ if $m_{i'}$ is odd},
\end{array}
\right .
\]
that is, 
\[
[v_{11}, \dots , v_{1 m_1}] \dots [v_{k1}, \dots , v_{k m_k}] = 2^{N_k -1} \prod_{(i,j) \in \mathcal P} e_{ij} \otimes \prod_{(i',j') \in \mathcal P'}  e'_{i' j'} . 
\]
By (\ref{prod1}) and (\ref{prod2}), we have 
\[
[v_{11}, \dots , v_{1 m_1}] \dots [v_{k1}, \dots , v_{k m_k}] = (-1)^{\delta + \delta'} 2^{N_k -1}  \ e_1 e_2 \dots e_{\mathcal N} \otimes e_1 e_2 \dots e_r \ne 0, 
\]
as required.


\medskip
Case 2. Suppose that $F$ is a field of characteristic $2$. Let $\mathcal G$ be the group given by the presentation
\[
\mathcal G = \langle y_1, y_2, \dots \mid y_i^2 =1, \ \big( (y_i, y_j), y_k \big) = 1 \ (i,j,k = 1,2, \dots ) \rangle
\]
where $(a,b)=a^{-1}b^{-1} a b$. Then it is easy to check that $\mathcal G$ is a nilpotent group of class $2$ so $(a,b)c = c (a,b)$ for all $a,b,c \in \mathcal G$ and, therefore, $(a, bc) = (a,c) c^{-1} (a,b) c = (a,b) (a,c)$ (see \cite{DK17} for more details). It is clear that the quotient group $\mathcal G / \mathcal G'$ is an elementary abelian $2$-group so $b^2 \in \mathcal G' \subseteq Z (\mathcal G )$ for all $b \in \mathcal G$. It follows that $(a,b^2) =1$ so $(a,b)^2 = (a,b^2) =1$, that is, $(a,b) = (a, b)^{-1}$. Since  $(b,a) = (a,b)^{-1}$, we have $(a,b) = (b,a)$ for all $a,b \in \mathcal G$.  

Let $(<)$ be an arbitrary linear order on the set $\{ (i,j) \mid i,j \in \mathbb Z, \ 0< i <j \}$. The following lemma is well known and easy to check.

\begin{lemma}
\label{groupG}
Let $a \in \mathcal G$. Then $a$ can be written in a unique way  in the form
\begin{gather}
\label{formG}
a = y_{i_1} \dots y_{i_q} (y_{j_1}, y_{j_2}) \dots (y_{j_{2q'-1}}, y_{j_{2q'}}) 
\\
 \mbox{where }\ q,q' \ge 0;  \ i_1< \dots < i_q, \ j_{2s-1}< j_{2s} \mbox{ for all }  s,  \nonumber
 \\ (j_{2s-1}, j_{2s}) <  (j_{2s'-1}, j_{2s'}) \mbox{ if } \ s < s' . \nonumber
\end{gather}
\end{lemma}

Let $F \mathcal G$ be the group algebra of $\mathcal G$ over $F$. Let $d_{ij} = (y_i,y_j) + 1 \in F \mathcal G$. Note that $d_{ij} = d_{ji}$ and $d_{ii} = 0$ for all $i,j$.

Let $I$ be the two-sided ideal of $F \mathcal G$ generated by the set
\[
S = \{ d_{i_1 i_2}  d_{i_3 i_4} + d_{i_1 i_3} d_{i_2 i_4} \mid i_1, i_2, i_3, i_4 = 1,2 \dots \} .
\]

The following two lemmas are well known (see, for instance, \cite[Lemma 2.1]{GK95}, \cite[Example 3.8]{GL83});  their proofs can also be found in \cite{DK17}. 

\begin{lemma}
\label{nilp22}
For all $u_1, u_2, u_3 \in F \mathcal G$, we have $[u_1, u_2, u_3] \in I$. 
\end{lemma}

\begin{lemma}
\label{notin}
For all $\ell  >0$, we have 
\[
\bigl( ({y}_{1}, {y}_{2}) + 1 \bigr)  \bigl( ({y}_{3}, {y}_{4}) + 1 \bigr) \dots \bigl(  ({y}_{{2 \ell -1}}, {y}_{{2\ell}}) + 1 \bigr)  \notin I .
\]
\end{lemma}

Since the ideal $I$ is invariant under all permutations of the set $\{ y_1, y_2, \dots \}$ of generators of the group $\mathcal G$, we have the following.

\begin{corollary}
\label{corollary-notin}
Let $\ell >0$. Then $\bigl( ({y}_{i_1}, {y}_{i_2}) + 1 \bigr) \dots \bigl(  ({y}_{i_{2 \ell -1}}, {y}_{i_{2\ell}}) + 1 \bigr)  \notin I $ if all integers $i_1, i_2, \dots , i_{2 \ell}$ are distinct.
\end{corollary} 


Now we are in a position to complete the proof of Theorem \ref{maintheorem2}. Recall that $r = \sum_{i=1}^k m_i -2k +\ell  = N_{k, \ell} - 2$ is an even integer. Let $\mathcal G_r$ be the subgroup of $\mathcal G$ generated by $y_1, \dots , y_r$; let $I_r = I \cap F \mathcal G_r$. Take $G = F \mathcal G / I$, $H = F \mathcal G_r /I_r$. Take $A = G \otimes H$. By Lemma \ref{nilp22}, we can apply Lemma \ref{cl} and Corollaries \ref{nilp} and \ref{nilp2} to $A$.

We claim that  $[f_1,f_2] \dots [f_{r+1},f_{r+2}] \in I_r$ for all $f_i \in F \mathcal G_r$. Indeed, we may assume without loss of generality that $f_i \in \mathcal G_r$ for all $i$. Since 
\begin{align*}
& [f_{2s-1}, f_{2s}] = f_{2s-1} f_{2s} + f_{2s} f_{2s-1} 
\\
= \ & f_{2s-1} f_{2s} \bigl( (f_{2s}, f_{2s-1} ) + 1 \bigr)  =  f_{2s-1} f_{2s} \bigl( (f_{2s-1}, f_{2s} ) + 1 \bigr)
\end{align*}
(recall that $F$ is a field of characteristic $2$), we have 
\[
[f_1,f_2] \dots [f_{r+1},f_{r+2}] = f_1 f_2 \dots f_{r+2} \bigl( (f_1,f_2)+1 \bigr) \dots \bigl( (f_{r+1}, f_{r+2}) + 1 \bigr) .
\]
It is clear that, for each $s$, $(f_{2s-1}, f_{2s}) = \prod_t c_{i_{st} j_{st}} $ for some commutators $c_{i_{st} j_{st}} = (y_{i_{st}}, y_{j_{st}})$. Let $d_{i_{st} j_{st}} = c_{i_{st} j_{st}} + 1$; then $c_{i_{st} j_{st}} = d_{i_{st} j_{st}} + 1$.  We have
\begin{align*}
& (f_{2s-1}, f_{2s}) + 1 = \prod_t c_{i_{st} j_{st}}  +1  = \Big( \prod_t ( d_{i_{st} j_{st}}   +1) \Big) +1 
\\
=  \ & \prod_t d_{i_{st} j_{st}}   + \dots +  \sum_{t<t'} d_{i_{st} j_{st}}  d_{i_{st'} j_{st'}}  + \sum_t d_{i_{st} j_{st}} .
\end{align*}
It follows that the product $\bigl( (f_1,f_2)+1 \bigr) \dots \bigl( (f_{r+1}, f_{r+2}) + 1 \bigr)$ can be written as a sum of products of the form 
\begin{equation}
\label{product2}
d_{q_1 q_2} \dots d_{q_{2 \ell -1} q_{2 \ell}} =   \bigl( (y_{q_1}, y_{q_2}) + 1 \bigr) \dots \bigl( (y_{q_{2 \ell -1}}, y_{q_{2 \ell}}) + 1 \bigr)
\end{equation}
where $2\ell \ge r+2 >r$. Hence, in the product (\ref{product2}) we have $q_{t}= q_{t'}$ for some $t < t'$. 

Note that $d_{j_1 j_3} d_{j_2 j_3}  \in I$ for all $j_1,j_2,j_3$ because $d_{j_1 j_3} d_{j_2 j_3} = d_{j_1 j_3} d_{j_2 j_3} + d_{j_1 j_2} d_{j_3 j_3}  \in S$.  Since $d_{i j} = d_{j i}$ for all $i,j$, we have $ d_{i_1 i_2} d_{i_3 i_4}  \in I$ if any two of the indices $i_1, i_2, i_3, i_4$ coincide. It follows that each product (\ref{product2}) belongs to $I_r = I \cap F \mathcal G_r$ and so does the product $\bigl( (f_1,f_2)+1 \bigr) \dots \bigl( (f_{r+1}, f_{r+2}) + 1 \bigr)$. Hence,  $[f_1,f_2] \dots [f_{r+1},f_{r+2}] \in I_r$, as claimed. Since $N_{k, \ell} = r +2$, we have $[f_1,f_2] \dots [f_{(N_{k, \ell} -1)},f_{N_{k, \ell}}] \in I_r$ for al $f_i \in F \mathcal G_r$.

For any $u \in F \mathcal G$, let $\bar{u} = u + I \in G = F \mathcal G /I$. Since one can view the algebra $H = F \mathcal G_r /I_r$ as a subalgebra of $G= F \mathcal G /I$, we also write $\bar{u} = u + I_r \in H = F \mathcal G_r /I_r$ for $u \in F \mathcal G_r$. 

By the observation above, 
$[\bar{f}_1,\bar{f}_2] \dots [\bar{f}_{(N_{k, \ell} - 1)},\bar{f}_{N_{k, \ell}}]=0$ for all $\bar{f}_i \in H$. Hence, by Corollary \ref{nilp}, we have $[u_1, \dots , u_{(1 + N_{k, \ell} )}]=0$ for all $u_i \in A = G \otimes H$, that is, $T^{(1 + N_{k, \ell})} (A) = 0$, as required. 

Let $\mathcal P$, $\mathcal P'$, $\mu$ and $\mu'$ be as in Case 1. Recall that $\mathcal N = \sum_{i=1}^k m_i$. Define 
\[
y_{ij} = y_{\mu (i, j)} \quad \bigl( (i, j) \in \mathcal P \bigr), 
\qquad \qquad
y'_{i' j'} = y_{\mu' (i', j')} \quad  \bigl( (i', j') \in \mathcal P' \bigr).
\]  
Define
\begin{align*}
v_{i1}  & =   \ \bar{y}_{i1} \otimes 1;
\\
v_{ij}   & =    \ \bar{y}_{ij} \otimes \bar{y}'_{ij} \qquad (1 \le i \le k; \ 2 \le j \le m_i -1) ;
\\
v_{i m_i} &  = \  \left\{
\begin{array}{ll}
\bar{y}_{i {m_i}} \otimes 1 & \mbox{if $m_i$ is even};
\\
\bar{y}_{i {m_i}} \otimes  \bar{y}'_{i {m_i}} & \mbox{if $m_i$ is odd}.
\end{array}
\right .
\end{align*}
If $m_i$ is even then, by Corollary \ref{nilp2}, 
\begin{align*}
& [v_{i 1}, v_{i 2}, \dots , v_{i m_{i}}]  
\\
= \ & [\bar{y}_{i1}, \bar{y}_{i2}] [\bar{y}_{i3}, \bar{y}_{i4}] \dots [\bar{y}_{i (m_i-1)}, \bar{y}_{i m_i}] \otimes [\bar{y}'_{i2}, \bar{y}'_{i3}] [\bar{y}'_{i4}, \bar{y}'_{i5}] \dots [\bar{y}'_{i (m_i - 2)}, \bar{y}'_{i (m_i - 1)}]
\\
= \  & \bar{y}_{i1} \bar{y}_{i2} \bar{y}_{i3} \dots \bar{y}_{i m_i} \bigl( ( \bar{y}_{i1}, \bar{y}_{i2}) + 1 \bigr) \bigl( ( \bar{y}_{i3}, \bar{y}_{i4}) + 1 \bigr)\dots \bigl( (\bar{y}_{i (m_i-1)}, \bar{y}_{i m_i}) +1 \bigr) 
\\
\otimes \ & \bar{y}'_{i2} \bar{y}'_{i3} \dots \bar{y}'_{i (m_i - 1)} \bigl( (\bar{y}'_{i2}, \bar{y}'_{i3}) + 1 \bigr) \bigl( (\bar{y}'_{i4}, \bar{y}'_{i5}) + 1\bigr) \dots \bigl( ( \bar{y}'_{i (m_i - 2)}, \bar{y}'_{i (m_i - 1)}) + 1 \bigr)
\end{align*}
If $m_i$ is odd then, by the same corollary, 
\begin{align*}
& [v_{i 1}, v_{i 2}, \dots , v_{i m_{i}}]  
\\
= \ & [\bar{y}_{i1}, \bar{y}_{i2}] [\bar{y}_{i3}, \bar{y}_{i4}] \dots [\bar{y}_{i (m_i-2)}, \bar{y}_{i (m_i -1)}] \bar{y}_{i m_i} \otimes [\bar{y}'_{i2}, \bar{y}'_{i3}] [\bar{y}'_{i4}, \bar{y}'_{i5}] \dots [\bar{y}'_{i (m_i - 1)}, \bar{y}'_{i m_i }]
\\
= \  & \bar{y}_{i1} \bar{y}_{i2} \bar{y}_{i3} \dots  \bar{y}_{i (m_i - 1)} \bar{y}_{i m_i} \bigl( ( \bar{y}_{i1}, \bar{y}_{i2}) + 1 \bigr) \bigl( ( \bar{y}_{i3}, \bar{y}_{i4}) + 1 \bigr)\dots \bigl( (\bar{y}_{i (m_i-2)}, \bar{y}_{i (m_i -1)}) +1 \bigr) 
\\
\otimes \ & \bar{y}'_{i2} \bar{y}'_{i3} \dots \bar{y}'_{i (m_i - 1)} \bar{y}'_{i m_i} \bigl( (\bar{y}'_{i2}, \bar{y}'_{i3}) + 1 \bigr) \bigl( (\bar{y}'_{i4}, \bar{y}'_{i5}) + 1\bigr) \dots \bigl( ( \bar{y}'_{i (m_i - 1)}, \bar{y}'_{i m_i }) + 1 \bigr)
\end{align*}
It follows that
\[
[v_{11}, \dots , v_{1 m_1}] \dots [v_{k1}, \dots , v_{k m_k}] = \bar{y} \ Q \otimes   \bar{y}' \ Q'
\]
where 
\[
\bar{y} =  \prod_{i=1}^k \prod_{j=1}^{m_i} \bar{y}_{ij} ,
\qquad
\bar{y}' = \prod_{i'=1}^k \prod_{j'=2}^{m_{i'}'} \bar{y}'_{i' j'},
\]
\[
m'_{i'} = \left\{
\begin{array}{ll}
m_{i'} - 1 & \mbox{ if $m_{i'}$ is even};
\\
m_{i'} & \mbox{ if $m_{i'}$ is odd},
\end{array}
\right .
\]
\[
Q = \prod_{i=1}^k \prod_{j=1}^{\bigl[ \frac{m_i}{2} \bigr]} \bigl( (\bar{y}_{i (2j-1)} , \bar{y}_{i (2j)}) + 1 \bigr) ,
\qquad
Q' = \prod_{i'=1}^k \prod_{j'=1}^{\bigl[ \frac{m'_{i'} - 1}{2} \bigr]} \bigl( (\bar{y}'_{i' (2j')} , \bar{y}'_{i' (2j'+1)}) + 1 \bigr)  .
\]
Since $\mu$ is injective, all elements $y_{i (2j - 1)}, y_{i (2j)}$ $(i = 1,2, \dots , k; \ j = 1,2, \dots , \bigl[ \frac{m_i}{2} \bigr] )$ that appear in $Q$ are distinct elements of the set $\{ y_1, y_2, \dots \}$. Hence, by Corollary \ref{corollary-notin}, we have $Q \ne 0$ in $G = F \mathcal G /I$. Similarly, $Q' \ne 0$ in $H = F \mathcal G_r /I_r$. Since $\bar{y}$ and $\bar{y}'$ are invertible elements of $G$ and $H$, respectively, we have $\bar{y} \ Q \otimes \bar{y}' \ Q' \ne 0$, that is, 
\[
[v_{11}, \dots , v_{1 m_1}] \dots [v_{k1}, \dots , v_{k m_k}]  \ne 0, 
\]
as required.

This completes the proof of Theorem \ref{maintheorem2}.
\end{proof}

\noindent
\textbf{Remark.}
Recall that in the proof of Theorem \ref{maintheorem2}  we use the same algebra $A$ that was used in the proof of \cite[Theorem 1.4]{DK17}. Note that in both proofs one can choose the algebra $A$ different from one used in our proofs. For example, let $F$ be any field and let $r = m+n-4= 2(m'+n'-2)$. Let $A = F \langle X \rangle /T^{(3)} \otimes F \langle X_r \rangle /T^{(3)}_r$ where $X_r = \{ x_1, \dots , x_r \}$ and $T^{(3)}_r = T^{(3)} \bigl( F \langle X_r \rangle \bigr) =  T^{(3)} \cap F \langle X_r \rangle.$ Then $A$ satisfies the conditions i) and ii) of Theorem \ref{maintheorem2}; one can check this using a description of a basis of $F \langle X \rangle /T^{(3)}$ over $F$. Such a description can be deduced, for instance, from \cite[Proposition 3.2]{BEJKL12} or found (if $char \ F \ne 2$) in \cite[Proposition 9]{BKKS10}.

Our choice of the algebra $A$ in the proof of Theorem \ref{maintheorem2} was made with a purpose to have the paper self-contained. 

\section*{Acknowledgments}

This work was partially supported by CNPq grant 310331/2015-3 and by RFBR grant 15-01-05823. We thank Victor Petrogradsky for pointing out the references \cite{LS85,SS90}.


\end{document}